\newtheorem{theorem}{Theorem}[section]
\newtheorem{proposition}[theorem]{Proposition}
\newtheorem{lemma}[theorem]{Lemma}
\newtheorem{corollary}[theorem]{Corollary}
\theoremstyle{definition}
\newtheorem{algorithm}[theorem]{Algorithm}
\newtheorem{construction}[theorem]{Construction}
\theoremstyle{remark}
\newtheorem*{remark}{Remark}
\newtheorem*{assertion}{Assertion}
\newtheorem*{claim}{Claim}
\numberwithin{equation}{section}
\newenvironment{decisionproblem}[1]%
	{\vskip\topsep\noindent\textsc{#1.\/}}%
	{\par\vskip\topsep}%
\def\xp#1{\par\global\leftskip=#1\parindent
       \noindent\hskip-\parindent\ignorespaces}
\def\xpend{\par\global\leftskip=0pt\noindent\ignorespaces}
\def\ga{N}
\def\gb{P}
\def\gc{Q}
\def\gd{R}
\def\gs{A}
\def\gt{B}
\def\va(#1#2){a_{#1#2}}
\def\vb(#1#2){b_{#1#2}}
\def\vc(#1#2){c_{#1#2}}
\def\vd(#1#2){d_{#1#2}}
\def\vpz{p_0}
\def\vp{p}
\def\vxa{x_1}
\def\vya{y_1}
\def\vxb{x_2}
\def\vyb{y_2}
\def\vxc{x_3}
\def\vyc{y_3}
\def\vxd{x_4}
\def\vyd{y_4}
\def\vxe{x_5}
\def\vye{y_5}
\def\vq{q}
\def\vqz{q_0}
\def\ww#1#2#3{W_{#1}^{#2#3}}
\def\wb#1#2#3{X_{#1}^{#2#3}}
\def\ws#1{Y_{#1}}
\def\wt#1#2#3{Z_{#1}^{#2#3}}
\def\wto#1{Z_{#1}}
\def\rev(#1){{#1}^{-1}}
\def\revb(#1){({#1})^{-1}} 
\def\db(#1#2){\delta_{#1}({#2})}
\def\ein(#1#2){\sigma^-_{#1}({#2})}
\def\eout(#1#2){\sigma^+_{#1}({#2})}
\def\autb(#1#2){\pi_{#1#2}}
\def\auts{\alpha}
\def\siff{\Leftrightarrow}
\def\simplies{\Rightarrow}
\let\isom\cong 
\def\cc{\mkern0.2\thinmuskip{\cdot}\mkern0.2\thinmuskip}%
\def\eth{{E_3}}
\def\ethg{{E_3^G}}
\def\ethgc{{E_3^{\gc}}}
\def\Rot{{\fam0 Rot}}
\def\cwn{{\fam0 cwn}}
\def\cp{{\fam0 cp}}
\def\srs{{\fam0 srs}}
\def\ees{D} 
\def\realline{\hbox to \hsize}
\journal{Theoretical Computer Science}
\def\ps@pprintTitleNoSubTo{%
     \let\@oddhead\@empty
     \let\@evenhead\@empty
     \def\@oddfoot{\footnotesize\itshape\hfill\today}%
     \let\@evenfoot\@oddfoot}
\let\ps@pprintTitle\ps@pprintTitleNoSubTo
\begin{document}

\begin{frontmatter}

\title{Edge-outer graph embedding and the complexity of the DNA reporter strand problem} 

\author[label1, label3]{M. N. Ellingham\corref{cor1}}

\address[label1]{Department of Mathematics, 1326 Stevenson Center,
Vanderbilt University,
Nashville, Tennessee 37240}
\fntext[label3]{Supported by Simons Foundation award 429625}

\ead{mark.ellingham@vanderbilt.edu} \ead[url]{https://math.vanderbilt.edu/ellingmn/}

\author[label2,label4]{Joanna A. Ellis-Monaghan}

\address[label2]{Department of Mathematics, Saint Michael's College, One Winooski Park, Colchester, Vermont 05458}
\ead{jellis-monaghan@smcvt.edu}
\ead[url]{http://www.smcvt.edu/pages/get-to-know-us/faculty/ellis-monaghan-jo.aspx}
\fntext[label4]{Supported by NSF grant DMS-1332411}

\begin{abstract}
In 2009, Jonoska, Seeman and Wu showed that every graph admits a route for a DNA reporter strand, that is, a closed walk covering every edge either once or twice, in opposite directions if twice, and passing through each vertex in a particular way.  This corresponds to showing that every graph has an \emph{edge-outer embedding}, that is, an orientable embedding with some face that is incident with every edge. In the motivating application, the objective is such a closed walk of minimum length.  Here we give a short algorithmic proof of the original existence result, and also prove that finding a shortest length solution is NP-hard, even for $3$-connected cubic ($3$-regular) planar graphs. Independent of the motivating application, this problem opens a new direction in the study of graph embeddings, and we suggest several problems emerging from it.

\end{abstract}

\begin{keyword} 
DNA origami \sep reporter strand \sep orientable graph embedding
\sep one-face embedding \sep edge-outer embedding 
\MSC 05C10 \sep 05C90 \sep 68Q17 \sep 92D10

\end{keyword}

\end{frontmatter}


\section{Introduction}
\label{sec:intro}

DNA self-assembly, and self-assembly in general, is a rapidly advancing field, with \cite{P07, S15} providing good overviews. 	In 2006, Rothemund introduced `DNA origami', a new self-assembly method that increased the scale of DNA constructs and is one of the major developments in DNA nanotechnology this century~\cite{rothemund-folding-2006}.  DNA origami originally involved combining an M13 single-stranded cyclic viral molecule, called the \emph{scaffolding strand}, with 200-250 short staple strands to produce a 90 $\times$ 90 nm tile (in 2D), but now these strands can also produce 3D constructs with the structure of graphs or graph fragments \cite{D+09}.  At its most basic level, the design objective for DNA origami assembly of a graph-like structure is a strategy with the scaffolding strand following a single walk that traverses every edge at least once, with any edges that are traversed more than once visited exactly twice, in opposite directions (because DNA strands in a double helix are oppositely directed), and without separating or crossing through at a vertex.  See \cite{B+15, EMMP14, EPSetal17} for further work on routing scaffolding strands.

The problem of finding a similarly prescribed walk also arises in the context of determining an efficient route for a \emph{reporter strand}, that is, a strand that is recovered and read at the end of an experiment to report on one of the products of the experiment.  In designing the DNA self-assembly of a molecule with the structure of a graph $G$,  the boundary components of a `thickened' version of $G$ identify the circular DNA strands that assemble (hybridize) into the graph $G$.
 For details, see \cite{JSW09}, where the objective was to show that every graph has an associated orientable thickened graph, with a boundary component visiting every edge at least once, thus corresponding to the desired route for the reporter strand.

While motivated by a particular application, this problem of finding suitable walks is of independent intrinsic interest in topological graph theory.
Thickened graphs are also known as \emph{ribbon graphs}, and are equivalent to embeddings of graphs in compact surfaces.  All embeddings in this paper will be \emph{cellular}, in which every face is homeomorphic to an open disk.
Each face of an embedding corresponds to a boundary component of a thickened graph, which corresponds to a circular strand of DNA.
Thus, showing the existence of a suitable walk for a scaffolding  or reporter strand is equivalent to proving that every graph admits an orientable embedding where every edge lies on a single face.  The facial walk of this face gives the corresponding desired route for the DNA strand.  Prompted by this application, we define a \emph{reporter strand walk} in a graph $G$ to be a walk that uses every edge of $G$ at least once and occurs as a facial boundary walk in some orientable embedding of $G$.
See Figure \ref{fig:rsfw} for two examples of embeddings of $K_4$ on the torus with facial walks that are reporter strand walks.  Notice that the walk shown on the right is shorter (has fewer edges) than the one on the left.

Having a face that includes every edge is intermediate between two well-known properties of graph embeddings.  A \emph{one-face embedding} is an embedding in which there is only one face, so every edge occurs exactly twice on the boundary of this single face; a one-face embedding is necessarily a maximum genus embedding.  An \emph{outer embedding} in a given surface is an embedding in which all vertices appear on a single face (outerplanar graphs are particularly well-studied).  It therefore seems appropriate to call an embedding in which there is a face (the `outer' face) that includes every edge an \emph{edge-outer embedding}.  For connected graphs, a one-face embedding is edge-outer, and an edge-outer embedding is outer.

\begin{figure}[t]
\realline{\hfill%
        \includegraphics[scale=.3]{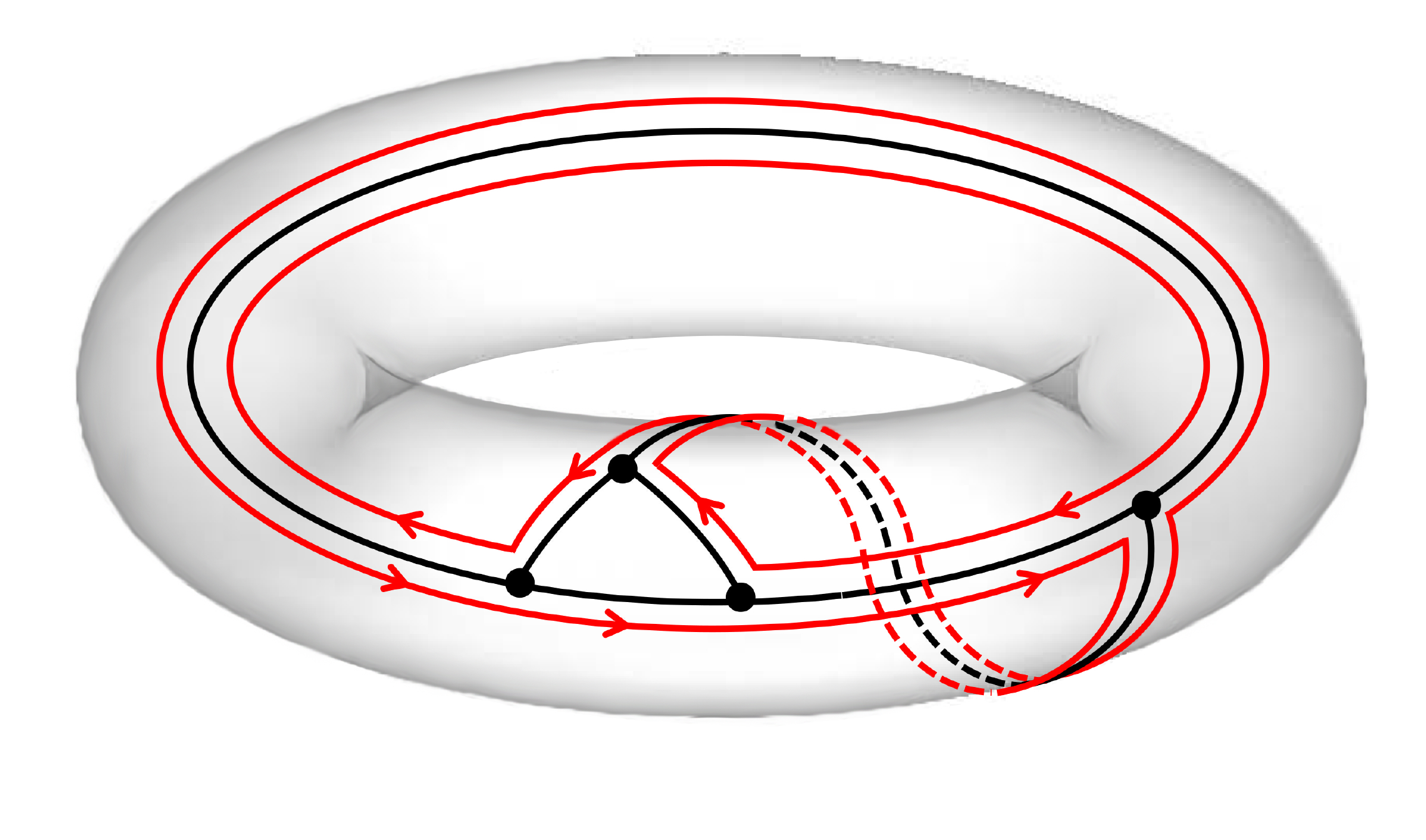}%
        \hfill
        \includegraphics[scale=.3]{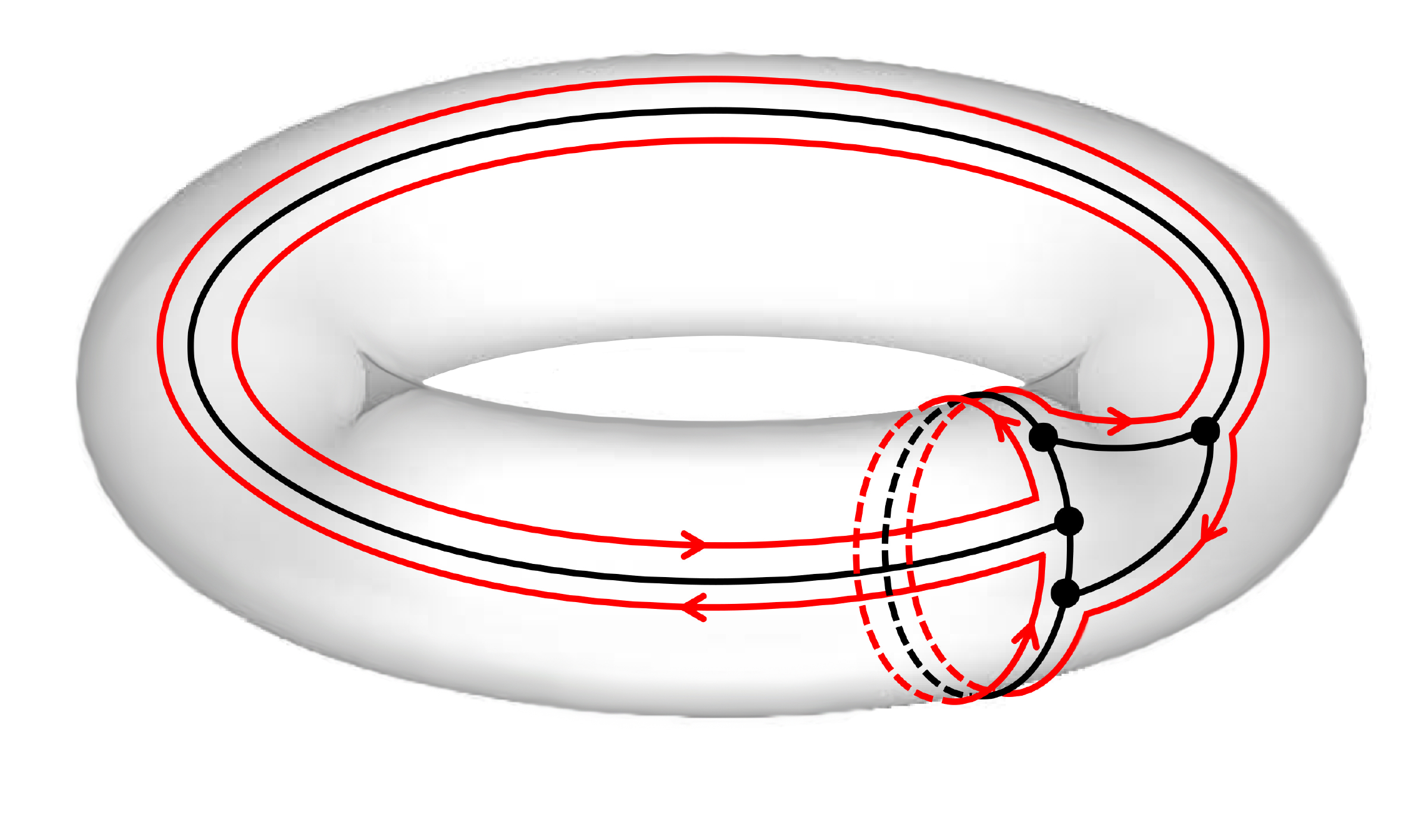}%
        \hfill}
\caption{Facial walks corresponding to reporter strands.}
\label{fig:rsfw}
\end{figure}

Here we give a short proof of the result from \cite{JSW09} that reporter strand walks always exist, and hence every graph has an edge-outer embedding.  Our result provides a polynomial-time algorithm that finds both the embedding and the reporter strand walk.
Furthermore, we show that the problem of finding a shortest length reporter strand walk, or equivalently, an embedding with smallest degree outer face, is NP-hard, even for $3$-connected cubic planar graphs.  We begin with a weaker NP-hardness result in Section \ref{sec:2c} and strengthen the result in Section \ref{sec:3c}.

\section{A short proof that reporter strand walks exist}
\label{sec:shortproof}

In this section we provide a short proof of the existence of reporter strand walks in all graphs.
In this paper, graphs may have loops and multiple edges.  A graph with neither loops nor multiple edges is \emph{simple}.  Sometimes we think of an edge as consisting of two distinct \emph{edge-ends} or just \emph{ends};
$\ees_G(v)$ denotes the set of ends incident with $v$ in $G$.  We refer the reader to \cite{West} for graph theory terms not defined in this paper.  Most terms defined explicitly in Sections \ref{sec:shortproof} to \ref{sec:3c} are terms we introduce for our specific needs in this paper.

We assume the reader is familiar with combinatorial descriptions of orientable cellular embeddings of graphs, using \emph{rotation schemes} or \emph{rotation systems}, as described in \cite{E-MM,GT,MT}.
A rotation scheme assigns to each vertex a \emph{rotation}, which is a cyclic ordering of the edge-ends incident with that vertex, corresponding to their order in the globally consistent clockwise direction in the surface.
Every orientable embedding is determined up to homeomorphism by its rotation scheme.

Suppose we have an embedding of a graph $G$ described by a rotation scheme.  Suppose also that at some vertex $v$ there is an incident face $f$ and an incident edge-end $d$, so that $d$ is not incident with $f$.  Assume that $f$ appears between consecutive edge-ends $d', d''$ in the rotation at $v$ ($f$ may also appear in other places around $v$).  Let $d^-$, $d^+$ be the edge-ends immediately before and after $d$ in the rotation at $v$, respectively.  Then the operation of \emph{flipping $d$ into $f$ (between $d'$ and $d''$)} modifies the rotation at $v$ from $(d^-, d, d^+, \dots, d', d'', \dots)$ to $(d^-, d^+, \dots, d', d, d'', \dots)$, moving $d$ into a position between $d'$ and $d''$.  See Figure~\ref{fig:rsalg} (which is explained in more detail below).

The operation of moving one edge-end in a vertex rotation was used, for example, by Duke \cite[Theorem 3.2]{Duke66} to show that the orientable genus range for a graph forms an interval.  Jonoska, Seeman and Wu \cite[Figure 5]{JSW09} used the special case of this operation for cubic graphs.

In our terminology, the main result of \cite{JSW09} is that every connected graph has a reporter strand walk.  The following algorithm provides a short proof.

\begin{algorithm}
\label{thm:rsalg}
Given a connected graph $G$ (with loops and multiple edges allowed):
\smallskip
\xp2 Take an arbitrary orientable embedding of $G$.
\xp2 Choose an arbitrary face $f$.
\xp2 While some edge is not in $f$ $\{$
   \xp3 Choose an edge $e$ not in $f$, but incident with
       a vertex $v$ of $f$.
   \xp3 Modify the embedding by flipping an end of $e$ incident with $v$ into $f$.
   \xp3 There is a new face using $e$ twice; let $f$ be this face.
\xp2 $\}$
\smallskip
\xpend
 \begin{assertion}
This algorithm runs in polynomial time.  It terminates with an orientable embedding of $G$ in which $f$ is a face using every edge of $G$.  Thus, the boundary walk of $f$ is a reporter strand walk in $G$.
 \end{assertion}
\end{algorithm}

\begin{proof}[Proof that the algorithm works]
The initial embedding exists because we may just give each vertex an arbitrary rotation.
The edge $e$ always exists because $G$ is connected.  When we flip the end of $e$ into $f$, we create a new face $f'$ that includes all edges of the old face $f$, and uses $e$ twice.  (If $e$ belonged to two distinct old faces then $f'$ also uses all other edges from those faces.
Otherwise, $e$ belonged to a single old face $g$, and the two occurrences of $e$ split the boundary of $g$ into two pieces; $f'$ includes the edges of one of those pieces, and the other piece becomes the boundary of a second new face.  Also, the length of the face $f$ increases both when $e$ has only one end incident with a vertex of $f$ and when it has two, as the example below illustrates.)
Since $f'$ becomes the new $f$, the edge set of $f$ strictly increases at each iteration, until it contains all edges.

Since each edge is flipped at most once, and since tracing the faces initially and updating the tracing after each flipping are fast, the operations in the algorithm are easily implemented in polynomial time using the rotation scheme representation of an embedding.
\end{proof}

An example is shown in Figure \ref{fig:rsalg}, where we represent orientable embeddings of a graph as plane drawings with possible edge crossings.  With this representation, the rotation scheme just corresponds to the clockwise ordering at each vertex in the drawing, and we can trace faces in the usual way for a plane graph, except that we ignore edge crossings.
We show a complete run of the algorithm, which requires two iterations.
The tracing of the face $f$ is shown initially and after each iteration (in red, if color appears) and the edges $e_1$, $e_2$ used in the two iterations are labeled.

\begin{figure}[t]
\realline{\hfill%
	\includegraphics[scale=1.27]{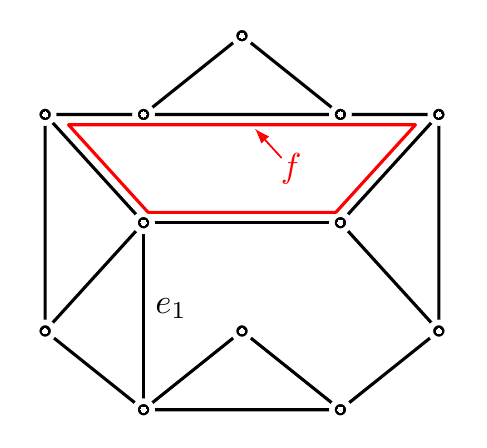}%
	\kern20pt\raise70pt\hbox{$\to$}\kern20pt%
	\includegraphics[scale=1.27]{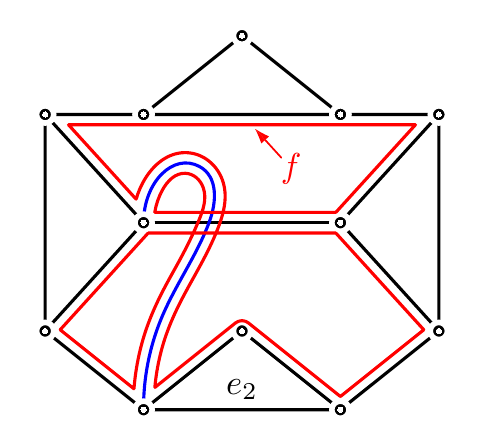}%
	\hfill}
\realline{\hfill%
	\raise70pt\hbox{$\to$}\kern20pt%
	\includegraphics[scale=1.27]{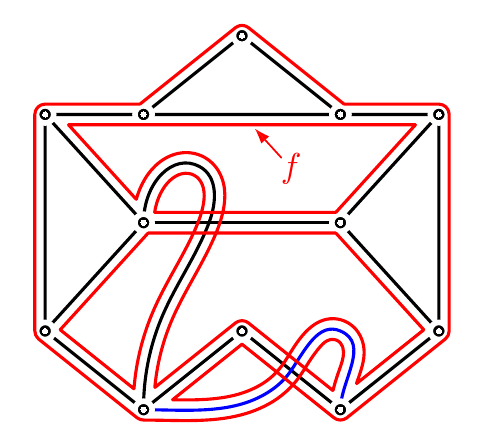}%
	\hfill}
\caption{Example of Algorithm \ref{thm:rsalg}.}
\label{fig:rsalg}
\end{figure}

\begin{corollary}\label{cor:rsmaxgenus}
 Every connected graph $G$ (loops and multiple edges allowed) has an orientable embedding such that there is a face $f$ whose boundary contains every edge, and the genus of the embedding is the maximum orientable genus of $G$.
\end{corollary}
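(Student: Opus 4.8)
The plan is to run Algorithm \ref{thm:rsalg} again, but this time starting from an embedding of $G$ of maximum orientable genus rather than an arbitrary embedding, and to show that no flip ever decreases the genus. Since the genus starts at the maximum and can never exceed it, it must then remain equal to the maximum throughout the run; combining this with the Assertion in Algorithm \ref{thm:rsalg} (which applies to any starting embedding) yields the corollary.

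The heart of the matter is a single-flip statement: flipping an edge-end into a face never decreases the orientable genus. I would prove this by tracking the number of faces $F$ and invoking Euler's formula $V - E + F = 2 - 2g$, so that, with $V$ and $E$ fixed, it suffices to show $F$ does not increase. Suppose we flip the end $d$ of an edge $e$ at a vertex $v$ into a face $f$, where $e$ is not incident with $f$. Let $A$ and $B$ be the faces containing, respectively, the corner at $v$ immediately before $d$ and the corner at $v$ immediately after $d$ in the rotation; each of $A$ and $B$ contains the end $d$, hence the edge $e$, and so each differs from $f$. Now repeat the case distinction already made in the proof of Algorithm \ref{thm:rsalg}: if $e$ lies on only one face then $A = B$, and the flip replaces the two faces $A$ and $f$ by two new faces, so $F$ is unchanged and the genus is unchanged; if $e$ lies on two distinct faces $A \neq B$, then the flip merges the three faces $A$, $B$, $f$ into a single new face, so $F$ drops by $2$ and the genus rises by $1$. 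In neither case does $F$ increase. (An alternative, purely combinatorial justification: the flip alters the rotation at $v$ by composing it with a single $3$-cycle of edge-ends, so it changes the number of facial walks by $0$ or $\pm 2$; since the three edge-ends moved by this $3$-cycle lie on the facial walks of $A$, $B$ and $f$ respectively, the only possibilities are exactly the two cases above, and $F$ can never go up.)

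Granting the single-flip statement, the corollary follows immediately: start Algorithm \ref{thm:rsalg} from a maximum genus embedding of $G$; by the Assertion it terminates with an orientable embedding in which some face $f$ has every edge on its boundary, and by the single-flip statement the genus never decreases along the way, so, being bounded above by the maximum genus of $G$, it stays equal to that maximum. The one piece of real work is the single-flip statement itself — the bookkeeping that a flip changes $F$ by $0$ or by $-2$ but never by $+2$ — and I expect this to be routine once the two cases from the proof of Algorithm \ref{thm:rsalg} are spelled out with a little care about which faces $A$ and $B$ are; everything else is just reuse of that algorithm.
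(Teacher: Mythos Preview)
Your proposal is correct and follows essentially the same approach as the paper: start Algorithm~\ref{thm:rsalg} from a maximum genus embedding and observe that each flip keeps the number of faces the same or decreases it by two, so the genus never drops. The paper simply cites the parenthetical case analysis in the proof of Algorithm~\ref{thm:rsalg} for this fact, whereas you spell out the two cases explicitly, but the argument is the same.
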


\begin{proof}
 Apply Algorithm \ref{thm:rsalg} to $G$, beginning with a maximum genus orientable embedding of $G$.  From the parenthetical note in the proof of the algorithm, at each step the number of faces stays the same or decreases, so the genus of the embedding stays the same or increases.  Since we started with a maximum genus embedding and the genus does not decrease, the final embedding is a maximum genus embedding. 
\end{proof}

\begin{remark}
Often, an important consideration in determining reporter or scaffolding strand walks is assuring that the result is not knotted.  The authors of \cite{EPSetal17} observed that knotted walks can result from A-trails (non-crossing Eulerian circuits) in toroidal meshes, while \cite{M+pre} characterizes knotted and unknotted A-trails in toroidal meshes and \cite{MH17} gives an an approximation algorithm for unknotted walks in surface triangulations. In \cite{B+15} the authors restrict to spheres to assure unknotted routes.  We note here that Algorithm \ref{thm:rsalg} starts with an abstract graph and outputs a walk that is a facial walk of the graph embedded in an orientable surface.  If the surface is embedded in $3$-dimensional space, then this walk bounds a disk, and hence is unknotted when viewed as a curve in space.  This means that every graph has some embedding in $3$-dimensional space, in fact an embedding in some orientable surface, with an unknotted reporter strand walk.
\end{remark}

\section{NP-completeness of short reporter strands}
\label{sec:2c}

\subsection{Two decision problems}

Given that reporter (or scaffolding) strand walks exist, for experimental efficiency it is natural to seek a shortest such walk, i.e., one with as few edges as possible, and hence a minimum number of duplicated edges.  
However, in this section we show that the following decision version of finding $\srs(G)$, the length of a shortest reporter strand walk in $G$, is NP-complete.

\begin{decisionproblem}{Shortest Reporter Strand Walk (SRS Walk)}
Given $(G, k)$ where $G$ is a graph and $k$ is a nonnegative integer, is $\srs(G) \le k$?  In other words, does $G$ have a reporter strand walk of length at most $k$?  
\end{decisionproblem}

\noindent Thus, the \textsc{SRS Walk} problem asks whether $G$ has an orientable embedding with a facial walk that uses all edges and has length at most $k$.  A `yes' instance can be certified by giving a suitable embedding of $G$, so \textsc{SRS Walk} is in NP. The construction used here to prove that \textsc{SRS Walk} is NP-complete is relatively straightforward; it forms the first step towards a stronger NP-completeness result given in Section \ref{sec:3c}.

All walks from this point onwards (including paths and cycles) are directed walks.
Let $\rev(W)$ denote the reverse of a walk $W$.
For two walks $W_1$ and $W_2$, $W_1 \cc W_2$ denotes their concatenation, which is only defined if the last vertex of $W_1$ is the first vertex of $W_2$.
A walk is \emph{edge-spanning} if it uses every edge at least once, and \emph{edge-$2$-bounded} if it uses every edge at most twice. In any walk an edge used exactly once is a \emph{solo} edge, and an edge used exactly twice (whether in the same direction, or in opposite directions) is a \emph{double} edge.

For much of this section and the next we will be working with simple graphs.  Thus, we can uniquely identify an edge between $u$ and $v$ using the notation $uv$.  We can also describe walks just using sequences of vertices: then $uv$ also means a one-edge walk from $u$ to $v$, in that direction.
Using special notation to distinguish between the (undirected) edge $uv$ and the (directed) walk $uv$ would be unwieldy; we rely instead on context or explicit textual explanation.

We are interested in walks that can occur as face boundaries in an orientable graph embedding.
Loosely speaking, if a walk $W$ can be a face boundary in some orientable embedding of the graph, then we should be able to glue a facial disk to the graph, identifying its boundary with $W$, without preventing the neighborhood of any vertex from being an open disk in some  embedding, and without introducing nonorientability.  

Formalizing this objective yields two properties we desire in walks.  Given a walk $W$ in $G$ and $v \in V(G)$, let $\Rot_G(W,v)$ be the graph with vertex set $\ees_G(v)$, where we add one edge between edge-ends $d, d'$ for each time $W$ enters $v$ on $d$ and immediately leaves on $d'$, or vice versa.
In an embedding, for each $v$ the union of $\Rot_G(W,v)$ over all facial walks $W$ is a cycle on $\ees_G(v)$ describing the (undirected) rotation at $v$, so each $\Rot_G(W,v)$ is a subgraph of such a cycle.
Therefore, we say a closed walk $W$ is \emph{rotation-compatible in $G$} if for every $v \in V(G)$, $\Rot_G(W,v)$ is either a cycle with vertex set $\ees_G(v)$, or a union of vertex-disjoint paths.
Also, a walk is \emph{orientable} if it uses each edge at most once in each direction.  A walk that is rotation-compatible or orientable is edge-$2$-bounded.

A result of \v{S}koviera and \v{S}ir\'{a}\v{n} \cite[Prop.~1]{SS87} implies that a closed walk in $G$ occurs as a face boundary in some orientable embedding of $G$ if and only if it is orientable and rotation-compatible in $G$.
Loosely, if $W$ is orientable and rotation-compatible then each $\Rot_G(W,v)$ describes a partial rotation at $v$ that can be arbitrarily completed to a full rotation, giving a rotation scheme.
Thus, a reporter strand walk in $G$ is precisely a closed walk that is edge-spanning, orientable, and rotation-compatible in $G$.

There is a natural lower bound on the length of a reporter strand walk.  A \emph{Chinese postman walk} is an edge-spanning closed walk of minimum length.
A Chinese postman walk is edge-$2$-bounded, but need not be rotation-compatible or orientable.
Such walks were first considered by Guan \cite{Guan60}.
We let $\cp(G)$ denote the length of a Chinese postman walk in $G$.  Since every reporter strand walk is an edge-spanning closed walk, $\srs(G) \ge \cp(G)$.
Thus, we have another decision problem that may be regarded as a more specific version of the \textsc{SRS Walk} problem.

\begin{decisionproblem}{Chinese Postman Reporter Strand Walk (CPRS Walk)}
Given a graph $G$, is $\srs(G)=\cp(G)$?
In other words, does $G$ have a reporter strand walk that is also a Chinese postman walk?  (Such a walk is a \emph{Chinese postman reporter strand walk} or \emph{CPRS walk}.)
\end{decisionproblem}

Edmonds and Johnson \cite{EJ73} showed that $\cp(G)$ can be computed in polynomial time.  Thus, we can verify a `yes' instance of the \textsc{CPRS Walk} problem in polynomial time by checking that a given reporter strand walk (the certificate) has length $\cp(G)$.  This shows that \textsc{CPRS Walk} is in NP.  Moreover, every instance $G$ of \textsc{CPRS Walk} can be transformed in polynomial time to the instance $(G, \cp(G))$ of the \textsc{SRS Walk} problem, involving the same graph.  Then $G$ is a `yes' instance of \textsc{CPRS Walk} if and only if
$(G, \cp(G))$ is a `yes' instance of \textsc{SRS Walk}.  Therefore, if we show that \textsc{CPRS Walk} is NP-complete for a class of graphs, \textsc{SRS Walk} is also NP-complete for that class.

While the right side of Figure \ref{fig:rsfw} shows that $K_4$ has a CPRS walk (since $\cp(K_4)=8$), in an arbitrary graph a CPRS walk may not exist, i.e., a shortest reporter strand walk is not generally a Chinese postman walk.  For example, even the $2$-vertex theta graph in Figure \ref{fig:theta} has a shortest reporter strand walk of length 6, while a Chinese postman walk has length 4, and thus the theta graph does not have a CPRS walk.

\begin{figure}[t]
\realline{\hfill%
	\raise0pt\hbox{\includegraphics[scale=1.27]{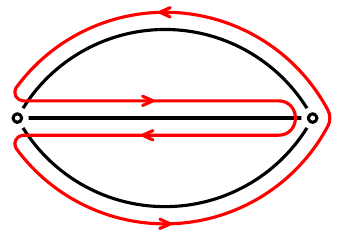}}%
	\hfill%
	\raise0pt\hbox{\includegraphics[scale=1.27]{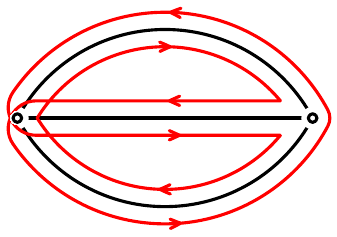}}%
	\hfill%
}
\caption{Chinese postman walk (left) and shortest reporter strand walk (right) in a theta graph.}
\label{fig:theta}
\end{figure}

Chinese postman walks in triangulations of the sphere are central to the strand routing algorithm of \cite{B+15}.  However, the walks produced by that algorithm may have retractions (defined below in Subsection \ref{subsec:cubic}), and so are not in general reporter strand walks according to our definition.
 
 We will prove that the problem \textsc{CPRS Walk} is NP-complete even when restricted to $2$-connected cubic planar graphs, and, in the next section, to $3$-connected cubic planar graphs.
We do this by reducing the hamilton cycle problem for $3$-connected cubic planar graphs, which is known to be NP-complete \cite{GJT76}, to \textsc{CPRS Walk}.

In this section and the following section, we work with cubic graphs and their subgraphs.

\subsection{Special properties of cubic graphs}
\label{subsec:cubic}

Both Chinese postman and reporter strand walks have special structures in $2$-connected cubic graphs.

First we consider Chinese postman walks.  Suppose $G$ is a $2$-connected cubic graph.
Any edge-spanning walk in $G$ uses all three edges at each vertex, so it must use each vertex at least twice, and hence it must have length at least $2|V(G)|$.  Thus, its length is $2|V(G)|$ if and only if it uses each vertex exactly twice, if and only if it contains exactly two solo edges and one double edge incident with each vertex.
Now, by a well-known result \cite{Petersen1891} of Petersen, $G$ has a perfect matching $M$.  If we replace each edge of $M$ in $G$ by two parallel edges to obtain $G'$, then $G'$ is eulerian, and an euler tour in $G'$ gives an edge-spanning walk in $G$ of length $2|V(G)|$. Therefore, a Chinese postman walk has length $2|V(G)|$, and hence uses two solo edges and one double edge at each vertex.
We summarize this as follows.

\begin{lemma}
\label{thm:cubiccp}
A closed walk in a $2$-connected cubic graph $G$ is a Chinese postman walk if and only if it is edge-spanning, edge-$2$-bounded, and its double edges form a perfect matching of $G$.
\end{lemma}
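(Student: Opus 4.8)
The plan is to formalize the counting argument already sketched in the paragraph preceding the lemma, in three short steps: (1) pin down $\cp(G) = 2|V(G)|$, (2) use that value to characterize Chinese postman walks via the tightness of the lower bound, and (3) check the converse by a direct edge-end count. Throughout I would use the basic bookkeeping fact that a closed walk traverses an edge a certain number of times, each traversal contributing one \emph{edge-end} at each of the edge's two endpoints, so the length of the walk equals half the total number of edge-ends it uses, summed over all vertices.

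First I would establish $\cp(G) = 2|V(G)|$. For the lower bound, note that if a closed walk visits a vertex $v$ exactly $t$ times it uses exactly $2t$ edge-ends at $v$; if the walk is edge-spanning then all three edges at $v$ are used, so $2t \ge 3$, and since $2t$ is even this forces $t \ge 2$, i.e. at least four edge-ends at $v$. Summing over all vertices and halving shows every edge-spanning closed walk has length at least $2|V(G)|$. For the matching upper bound, I would invoke Petersen's theorem \cite{Petersen1891} to obtain a perfect matching $M$, replace each edge of $M$ by a parallel pair to get a graph $G'$ that is connected (as $G$ is $2$-connected) and $4$-regular, hence eulerian, and then translate an euler tour of $G'$ into an edge-spanning closed walk of $G$ of length $|E(G)| + |M| = \tfrac32|V(G)| + \tfrac12|V(G)| = 2|V(G)|$. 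Hence $\cp(G) = 2|V(G)|$.

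For the forward implication, suppose $W$ is a Chinese postman walk. Then $W$ has length exactly $2|V(G)|$, so all the inequalities in the lower-bound argument are equalities: $W$ visits every vertex exactly twice, contributing exactly four edge-ends at each $v$. The only way to split four edge-ends among the three edges at $v$ with each edge getting at least one is $2+1+1$, so exactly one edge at $v$ is a double edge and the other two are solo; in particular $W$ is automatically edge-$2$-bounded. Since each vertex is incident with exactly one double edge, the double edges form a perfect matching of $G$. For the converse, suppose $W$ is edge-spanning and edge-$2$-bounded and its set $D$ of double edges is a perfect matching. Fix $v$: its unique edge in $D$ gives two edge-ends, and each of the other two edges at $v$ is used at least once (edge-spanning) but not twice (a second use would put it in $D$ and break the matching), so it gives exactly one edge-end; thus $v$ receives four edge-ends. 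Summing gives $|W| = 2|V(G)| = \cp(G)$, so $W$ is a Chinese postman walk.

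I do not expect a real obstacle here; the argument is elementary and, as noted, is essentially contained in the discussion preceding the statement. The only points that need a little care are the edge-end bookkeeping (being explicit that walk length is half the total edge-end count, and that the edge-end count at a vertex is even because it equals twice the number of visits) and the observation that edge-$2$-boundedness of a Chinese postman walk in this setting is a \emph{consequence} of the length computation rather than a hypothesis one must assume separately.
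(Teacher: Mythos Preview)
Your proposal is correct and follows essentially the same approach as the paper: the discussion preceding the lemma already computes $\cp(G)=2|V(G)|$ via the per-vertex lower bound and the Petersen-matching/euler-tour upper bound, and then reads off the two-solo/one-double structure at each vertex; you have simply made the edge-end bookkeeping and the converse direction more explicit. One tiny quibble: $G'$ is connected simply because adding parallel edges to a connected graph preserves connectivity---the $2$-connectedness of $G$ is needed earlier, to guarantee (via bridgelessness) that Petersen's theorem applies.
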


Now we consider reporter strand walks.
In cubic graphs, rotation-compatibility can be replaced by a simpler property.
A \emph{retraction} in a walk consists of an edge followed immediately by the same edge in the opposite direction.
For example, the walk on the left in Figure \ref{fig:theta} has a retraction.
A walk with no retractions is \emph{retraction-free}.
If a graph has no vertices of degree $1$, every rotation-compatible closed walk is retraction-free.
If a graph has no vertices of degree $4$ or more, every retraction-free edge-$2$-bounded closed walk is rotation-compatible.
Therefore, in a cubic graph a closed walk is rotation-compatible if and only if it is edge-$2$-bounded and retraction-free, giving the following.

\begin{lemma}
\label{thm:cubicrs}
A closed walk in a cubic graph $G$ is a reporter strand walk if and only if it is edge-spanning, orientable, and retraction-free.
\end{lemma}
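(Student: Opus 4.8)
The plan is to deduce the lemma from the characterization recalled just above the statement: a closed walk $W$ in $G$ is a reporter strand walk if and only if it is edge-spanning, orientable, and rotation-compatible in $G$. Since ``orientable'' already forces ``edge-$2$-bounded'', and both that characterization and the statement of the lemma carry the clauses ``edge-spanning'' and ``orientable'', the entire proof reduces to one claim: \emph{for an edge-spanning, edge-$2$-bounded closed walk $W$ in a cubic graph $G$, $W$ is rotation-compatible if and only if $W$ is retraction-free.} This is the content of the two structural remarks preceding the lemma, specialized to cubic graphs; but since $|\ees_G(v)| = 3$ for every $v$, it is quickest to argue directly, examining the $3$-vertex graph $\Rot_G(W,v)$ at each vertex $v$.

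For the implication ``rotation-compatible $\Rightarrow$ retraction-free'' I would argue contrapositively. If $W$ has a retraction, then $W$ enters and leaves some vertex $x$ on the same edge-end $d$: if the retracted edge $e$ is not a loop, $x$ is the endpoint of $e$ at which $W$ reverses and $d$ is the end of $e$ at $x$; if $e$ is a loop, $x$ is its vertex and $d$ one of its two ends. In either case $\Rot_G(W,x)$ contains a loop at $d$. Since $|\ees_G(x)| = 3 \geq 2$, such a graph is neither a cycle with vertex set $\ees_G(x)$ nor a union of vertex-disjoint paths, contradicting rotation-compatibility.

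For the converse, ``retraction-free $\Rightarrow$ rotation-compatible'', I would fix $v$ and record three facts about the multigraph $\Rot_G(W,v)$ on its three vertices: (i) it has maximum degree at most $2$, because each edge-end at $v$ contributes to the degree of its vertex once per traversal of its edge and $W$ is edge-$2$-bounded; (ii) it is loopless, since a loop would be exactly a retraction as above; and (iii) it has no isolated vertex, since $W$ is edge-spanning and hence traverses every edge incident with $v$. A loopless multigraph on three vertices with all degrees in $\{1,2\}$ has degree sum $4$ or $6$; in the first case it is a path on the three vertices, in the second a triangle, i.e.\ a cycle with vertex set $\ees_G(v)$ — both of the permitted shapes. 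Thus $W$ is rotation-compatible at every $v$, and by the characterization above, together with edge-spanning, $W$ is a reporter strand walk.

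I expect no serious obstacle; the argument is routine. The two points worth handling explicitly are the presence of loops in $G$ (allowed here), which affects exactly how a retraction shows up as a loop in $\Rot_G(W,\cdot)$ in facts (i)--(ii), and the observation that the edge-spanning hypothesis is genuinely used in the converse direction — fact (iii) is what rules out degenerate configurations such as $\Rot_G(W,v)$ being a double edge on two ends with the third end isolated.
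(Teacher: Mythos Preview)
Your argument is correct and is essentially the paper's own: the lemma is stated without a separate proof, the justification being the two sentences immediately preceding it (no degree-$1$ vertices gives rotation-compatible $\Rightarrow$ retraction-free; no vertices of degree $\ge 4$ gives retraction-free $+$ edge-$2$-bounded $\Rightarrow$ rotation-compatible), and your direct case analysis of the three-vertex multigraph $\Rot_G(W,v)$ is exactly how one verifies these in the cubic case. Your remark that the edge-spanning hypothesis is what excludes the double-edge-plus-isolated-end configuration in $\Rot_G(W,v)$ is a nice point of care that the paper's informal discussion leaves implicit.
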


\begin{corollary}
\label{thm:cubiccprs}
A closed walk in a $2$-connected cubic graph is a Chinese postman reporter strand (CPRS) walk if and only if it is edge-spanning, orientable, retraction-free, and the double edges form a perfect matching of $G$.
\end{corollary}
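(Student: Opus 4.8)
The plan is to obtain the statement immediately by combining Lemma~\ref{thm:cubiccp} and Lemma~\ref{thm:cubicrs}, since a CPRS walk is by definition a closed walk that is at once a Chinese postman walk and a reporter strand walk. No new construction is needed; the content is entirely in matching up the hypotheses of the two lemmas.

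For the forward direction, suppose $W$ is a CPRS walk in a $2$-connected cubic graph $G$. Regarding $W$ as a reporter strand walk and applying Lemma~\ref{thm:cubicrs}, it is edge-spanning, orientable, and retraction-free. Regarding $W$ as a Chinese postman walk and applying Lemma~\ref{thm:cubiccp}, its double edges form a perfect matching of $G$. Hence $W$ has all four of the listed properties.

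For the converse, suppose $W$ is a closed walk in $G$ that is edge-spanning, orientable, retraction-free, and has its double edges forming a perfect matching of $G$. Since an orientable walk uses each edge at most once in each direction, it is edge-$2$-bounded, so Lemma~\ref{thm:cubiccp} applies and shows $W$ is a Chinese postman walk; simultaneously Lemma~\ref{thm:cubicrs} applies and shows $W$ is a reporter strand walk. Being both, $W$ is a CPRS walk. The argument is pure bookkeeping, and there is no real obstacle; the only step that needs a moment's attention is recalling that orientability already supplies the edge-$2$-bounded hypothesis required to invoke Lemma~\ref{thm:cubiccp}, as was noted in the text when these walk properties were introduced.
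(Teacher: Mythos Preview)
Your proof is correct and matches the paper's approach: the paper states the result as an immediate corollary of Lemmas~\ref{thm:cubiccp} and~\ref{thm:cubicrs} without writing out a proof, and your argument is exactly the intended combination of those two lemmas, including the observation that orientability supplies the edge-$2$-boundedness needed for Lemma~\ref{thm:cubiccp}.
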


The two walks in $K_4$ shown in Figure \ref{fig:rsfw} illustrate these characterizations: both satisfy Lemma \ref{thm:cubicrs} and are reporter strand walks, and the one on the right satisfies Corollary \ref{thm:cubiccprs} and is a CPRS walk.

Suppose $W$ is a CPRS walk in $2$-connected cubic $G$, and $v \in V(G)$. Since $W$ is orientable, the double edge at $v$, call it $\db(Wv)$, is used in both directions by $W$, so one of the solo edges at $v$, call it $\ein(Wv)$, must be used by $W$ to enter $v$, and the other, $\eout(Wv)$, must be used by $W$ to leave $v$.  Since $W$ is retraction-free, it must use the edge sequences $\ein(Wv) \db(Wv)$ and $\db(Wv) \eout(Wv)$ to pass through $v$.

Therefore, we can reconstruct $W$ from the choice of double edges (which form a matching) and of orientations for the remaining solo edges (one entering, one leaving each vertex).  To consider possible CPRS walks we make such choices and try to trace $W$ by following the edge sequences $\ein(Wv) \db(Wv)$ and $\db(Wv)\eout(Wv)$ at $v$.  In general this \emph{tracing procedure} may fail by finding a closed walk that is not edge-spanning.  If this does not happen we obtain a CPRS walk.

A connected graph $H$ with two vertices $v_1, v_2$ of degree $2$ and all other vertices of degree $3$ is called an \emph{edge gadget}.  If $G$ is a graph disjoint from $H$ and $u_1u_2 \in E(G)$, then we say $J = (G-u_1u_2) \cup H \cup \{u_1v_1, u_2v_2\}$ is obtained by \emph{bisecting $u_1u_2$ in $G$ with $H$}.  The \emph{cubic completion} of $H$ is $H^+ = H \cup v_1v_2$.
We leave the proof of the following straightforward result to the reader.

\begin{lemma}
\label{thm:cubicop2}
Let $G$ be a cubic graph, and $H$ an edge gadget.  Construct $J$ by bisecting an edge of $G$ with $H$.  Then $J$ is cubic.
If $G$ and the cubic completion $H^+$ are both $2$-connected, planar, and simple, then
$J$ is $2$-connected, planar, and simple.
\end{lemma}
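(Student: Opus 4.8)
The plan is to recognise $J$ as an edge-amalgamation (a $2$-sum along a single edge) of two graphs, each of which inherits the required properties, and then to invoke the well-known behaviour of $2$-sums. That $J$ is cubic follows from a degree count alone: in $G-u_1u_2$ the vertices $u_1,u_2$ have degree $2$ and all others degree $3$; in $H$ the vertices $v_1,v_2$ have degree $2$ and all others degree $3$; and adjoining $u_1v_1$ and $u_2v_2$ raises exactly $u_1,u_2,v_1,v_2$ to degree $3$, with no further hypotheses needed.

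Now suppose $G$ and $H^+$ are $2$-connected, planar and simple. Let $G^\circ$ be the graph obtained from $G$ by subdividing $u_1u_2$ twice, replacing it by a path $u_1v_1v_2u_2$ (reusing the names $v_1,v_2$), so that $G^\circ$ contains the edge $v_1v_2$. Because $G$ and $H$ are vertex-disjoint, $u_1,u_2\notin V(H)$, and $v_1v_2\notin E(H)$ (otherwise $H^+$ would not be simple), the graphs $G^\circ$ and $H^+$ meet in exactly the two vertices $v_1,v_2$ and the single edge $v_1v_2$, and a check of the definitions gives
\[
 J=(G^\circ\cup H^+)-v_1v_2.
\]
Since subdividing edges preserves planarity, $2$-connectedness and simplicity, $G^\circ$ has all three properties, as does $H^+$ by hypothesis, and both have at least three vertices. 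It therefore suffices to prove: if $A$ and $B$ are $2$-connected, planar and simple, each on at least three vertices, and they meet in exactly one edge $e=xy$ together with its ends, then $(A\cup B)-e$ is $2$-connected, planar and simple.

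Simplicity is immediate, since $x\ne y$ and the only edge possibly shared by $A$ and $B$ is $e$, which is deleted. For planarity, embed $A$ in the plane with $e$ on the boundary of the outer face and embed $B$ in a small disk with $e$ on its boundary, then slide the disk into the outer face of $A$ so that the two copies of $e$ coincide (ends matched); no crossing is created, so $A\cup B$, and hence $(A\cup B)-e$, is planar. For $2$-connectedness, recall that a $2$-connected graph on at least three vertices has no bridge, so $A-e$ and $B-e$ are connected; this supplies two internally disjoint $x$--$y$ paths in $(A\cup B)-e$, so it is connected, and a short case check on potential cut vertices finishes the argument (a cut vertex in $\{x,y\}$ is impossible because $A-w$ and $B-w$ are connected and share the remaining end of $e$; a cut vertex $w$ elsewhere, say in $A$, is impossible because $B-e$ keeps $x$ and $y$ together while every vertex of $A-w$ reaches $\{x,y\}$ within $A-e-w$).

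The only points requiring any care are arranging the two plane embeddings so that the common edge sits on the outer face and can be glued without crossings---routine, since any face incident with an edge may be chosen to be the outer face---and the cut-vertex bookkeeping above; neither is a serious obstacle, which is consistent with the result being called straightforward. One could equally argue directly on a plane embedding of $G$: deleting $u_1u_2$ leaves $u_1$ and $u_2$ incident with a common face, into which a plane drawing of $H$ having $v_1,v_2$ on its outer face can be inserted and then joined to $u_1$ and $u_2$ by $u_1v_1$ and $u_2v_2$.
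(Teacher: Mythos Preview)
Your proof is correct. The paper itself omits the argument entirely (``We leave the proof of the following straightforward result to the reader''), so there is nothing to compare against; your $2$-sum description $J=(G^\circ\cup H^+)-v_1v_2$ and the subsequent verification of simplicity, planarity, and $2$-connectedness are exactly the kind of routine check the authors had in mind.
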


\subsection{The NP-completeness result}

\begin{figure}[t]
\realline{\hfill%
	\raise50pt\hbox{\includegraphics[scale=1.27]{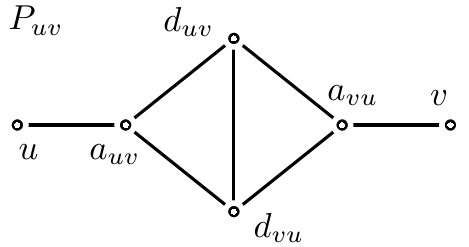}}%
	\hfill%
	\vbox{\hbox{\includegraphics[scale=1.27]{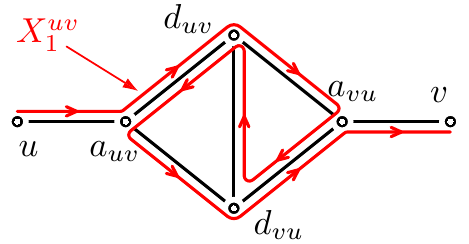}}
		\vskip10pt%
		\hbox{\includegraphics[scale=1.27]{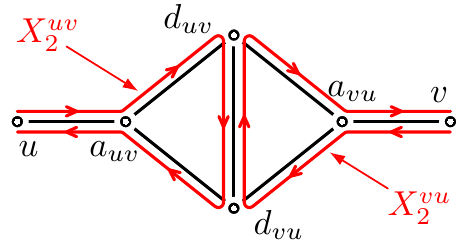}}%
	}
	\hfill%
}
\caption{Construction of $\gb$, and how a CPRS walk passes through $\gb_{uv}$.}
\label{fig:consgb}
\end{figure}

\begin{construction}
\label{thm:consgb}
Given a $3$-connected cubic planar simple graph $\ga$, construct a new graph $\gb$ by replacing each edge $uv$ by a subgraph $\gb_{uv}$ ($=\gb_{vu}$) consisting of a $4$-cycle $(\va(uv) \vd(uv) \va(vu) \vd(vu))$ on four new vertices and three additional edges $u \va(uv)$, $v \va(vu)$ and $\vd(uv) \vd(vu)$.  Note that order matters for subscripts in new vertex names.  We use $\autb(uv)$ ($=\autb(vu)$) to refer to the automorphism of $\gb_{uv}$ that swaps $\vd(uv)$ and $\vd(vu)$ and fixes the other vertices.
See Figure \ref{fig:consgb}.

\begin{claim}
The graph $\gb$ is a $2$-connected cubic planar simple graph.
\end{claim}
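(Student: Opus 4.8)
The plan is to reduce the claim to Lemma~\ref{thm:cubicop2}, by recognizing $\gb$ as the graph obtained from $\ga$ by repeatedly bisecting edges with one fixed edge gadget. Let $H$ be the graph on the four vertices $\va(uv), \vd(uv), \va(vu), \vd(vu)$ whose edges are the $4$-cycle $(\va(uv)\vd(uv)\va(vu)\vd(vu))$ together with the chord $\vd(uv)\vd(vu)$. In $H$ the vertices $\va(uv)$ and $\va(vu)$ have degree $2$ (each lies on two cycle edges, and the chord joins the two $\vd$-vertices), while $\vd(uv)$ and $\vd(vu)$ have degree $3$; since a $4$-cycle with a chord is connected, $H$ is a connected graph with exactly two vertices of degree $2$ and all others of degree $3$, that is, an edge gadget, with degree-$2$ vertices $v_1 = \va(uv)$ and $v_2 = \va(vu)$. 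Comparing with Construction~\ref{thm:consgb}, $\gb$ is exactly the graph produced from $\ga$ by bisecting every edge $uv$ with a copy of $H$, the two bisection edges $uv_1$ and $vv_2$ being $u\va(uv)$ and $v\va(vu)$, and the third ``additional edge'' $\vd(uv)\vd(vu)$ of the construction being the chord of $H$.

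The decisive observation is that the cubic completion $H^+ = H \cup \va(uv)\va(vu)$ contains all six edges on its four vertices, so $H^+ = K_4$, which is $2$-connected, planar, and simple. Since $\ga$ is $3$-connected---hence $2$-connected---as well as planar and simple, I would now apply Lemma~\ref{thm:cubicop2} once for each of the $|E(\ga)|$ edges of $\ga$, bisecting them one at a time. The graph at the start is $\ga$; at each subsequent stage the current graph is cubic, $2$-connected, planar, and simple by the induction hypothesis, and the next edge to be bisected is an original edge of $\ga$ and hence still present, because bisecting a different edge alters neither that edge nor the degrees of its ends. So Lemma~\ref{thm:cubicop2}, applied with $G$ the current graph and $H$ the gadget above, shows that the next graph is again cubic, $2$-connected, planar, and simple. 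After all $|E(\ga)|$ bisections we obtain $\gb$, which therefore has the stated properties; in particular cubicity of $\gb$ comes for free from the first conclusion of Lemma~\ref{thm:cubicop2} at each step (and can be double-checked directly: each original vertex $u$ keeps degree $3$ as its incident edge $uv$ becomes the single edge $u\va(uv)$, $\va(uv)$ has its two cycle edges and $u\va(uv)$, and $\vd(uv)$ has its two cycle edges and the chord).

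I do not anticipate a real obstacle. The only things to watch are that distinct bisections act on disjoint parts of the graph---so that the inductive step always has a genuine edge of the current graph to operate on---and the routine verification that $H^+ = K_4$. Should one prefer to avoid the induction, $2$-connectivity of $\gb$ can instead be checked directly: no single vertex separates $\gb$, since each block $\gb_{uv}$ contains two internally disjoint paths between $\va(uv)$ and $\va(vu)$ and $\ga$ minus any one vertex is connected. But routing through Lemma~\ref{thm:cubicop2} is the shorter route.
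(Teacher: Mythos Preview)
Your proposal is correct and follows essentially the same approach as the paper: identify the edge gadget $H = \gb_{uv} - \{u,v\}$, observe that its cubic completion $H^+$ is $K_4$ (hence $2$-connected, planar, simple), and then apply Lemma~\ref{thm:cubicop2} once for each edge of $\ga$. The paper's proof is just a more compressed version of exactly this argument.
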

\end{construction}

\begin{proof}[Proof of claim]
The graph $\gb'_{uv} = \gb_{uv}-\{u,v\}$, obtained by removing $u$ and $v$ and all their incident edges from $\gb_{uv}$, is an edge gadget.  Moreover, $(\gb'_{uv})^+ \isom K_4$ is $2$-connected, planar, and simple.
Replacing $uv$ by $\gb_{uv}$ is equivalent to bisecting $uv$ with $\gb'_{uv}$, so the claim follows by repeated application of Lemma \ref{thm:cubicop2}.
\end{proof}

\begin{lemma}
\label{thm:walkgb}
Suppose we construct $\gb$ as in Construction \ref{thm:consgb}.
Let $\wb1uv = u \va(uv) \vd(uv) \va(vu) \vd(vu) \vd(uv)
	\- \va(uv) \vd(vu) \va(vu) v$ and
$\wb2uv = u \va(uv) \vd(uv) \vd(vu) \va(uv) u$.
Then a CPRS walk $W$ in $\gb$ must pass through each subgraph $\gb_{uv}$ in one of two ways,

{\leftskip=\parindent

\hangindent\parindent\noindent
(a) as a single walk
$\wb1uv$, $\autb(uv)(\wb1uv)$, $\revb(\wb1uv)$ or $\revb({\autb(uv)(\wb1uv)})$; or

\hangindent\parindent\noindent
(b) as two walks $\wb2uv$ and $\wb2vu$, or $\autb(uv)(\wb2uv)=\revb(\wb2uv)$ and $\autb(uv)(\wb2vu)=\revb(\wb2vu)$.

}
\end{lemma}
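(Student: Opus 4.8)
The plan is to read everything off the characterization of CPRS walks in Corollary~\ref{thm:cubiccprs}. Since $\gb$ is a $2$-connected cubic graph (by the Claim in Construction~\ref{thm:consgb}), a CPRS walk $W$ in $\gb$ is edge-spanning, orientable and retraction-free, and its set $M$ of double edges is a perfect matching of $\gb$. I also use the local description recalled just before Construction~\ref{thm:consgb}: at each vertex $x$, writing $\db(Wx)$ for the unique $M$-edge at $x$ and $\ein(Wx),\eout(Wx)$ for the solo edges on which $W$ enters and leaves $x$, the walk $W$ passes through $x$ only via the edge-sequences $\ein(Wx)\,\db(Wx)$ and $\db(Wx)\,\eout(Wx)$; in particular, at every $x$ exactly one of the two solo edges at $x$ is directed into $x$ by $W$ and the other out of $x$.

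First I would determine $M\cap E(\gb_{uv})$. If $\vd(uv)\vd(vu)\in M$, then $\vd(uv)$ and $\vd(vu)$ are used up, so $\va(uv)$ must be matched by $u\va(uv)$ and $\va(vu)$ by $v\va(vu)$, giving $M\cap E(\gb_{uv})=\{u\va(uv),\,v\va(vu),\,\vd(uv)\vd(vu)\}$ (Case~I). If $\vd(uv)\vd(vu)\notin M$, then $\vd(uv)$ is matched to one of $\va(uv),\va(vu)$ and $\vd(vu)$ to the other, so $M\cap E(\gb_{uv})$ is $\{\va(uv)\vd(uv),\,\va(vu)\vd(vu)\}$ or $\{\va(uv)\vd(vu),\,\va(vu)\vd(uv)\}$ (Case~II). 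To control the bookkeeping I would use two symmetries. First, $\autb(uv)$ fixes $u$ and $v$ and so extends to an automorphism of the whole graph $\gb$ (acting as the identity outside $\gb_{uv}$); hence $\autb(uv)(W)$ is again a CPRS walk, and $\autb(uv)$ swaps the two sub-cases of Case~II. Second, $\rev(W)$ is a CPRS walk whose trace through $\gb_{uv}$ is the reverse of that of $W$. The four walks in part~(a) are exactly the orbit of $\wb1uv$ under $\langle\autb(uv),\,W\mapsto\rev(W)\rangle$, and the two options in part~(b) are exactly the orbit of the pair $\{\wb2uv,\wb2vu\}$ (on $\wb2uv$ the map $\autb(uv)$ agrees with reversal, which is why this orbit has only two members). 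So it suffices to analyse one representative of each of Cases~I and II.

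Once $M\cap E(\gb_{uv})$ is fixed, $\db(Wx)$ is known at each internal vertex $x\in\{\va(uv),\va(vu),\vd(uv),\vd(vu)\}$, so again exactly one solo edge at $x$ points in and one points out. Starting from the (a priori undetermined) direction of $u\va(uv)$ and propagating this ``one in, one out'' constraint through the internal vertices of $\gb_{uv}$, I expect every remaining internal solo-edge direction to be forced, leaving exactly two admissible orientation patterns in each case, interchanged by reversal (in Case~I also by $\autb(uv)$). Then, following the passages $\ein(Wx)\,\db(Wx)$ and $\db(Wx)\,\eout(Wx)$ at each internal vertex, I can read off the part of $W$ inside $\gb_{uv}$. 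In Case~II the edges $u\va(uv)$ and $v\va(vu)$ are solo, so $W$ crosses the boundary of $\gb_{uv}$ exactly twice and makes a single excursion from $u$ to $v$; the trace produces $\wb1uv$ (and $\revb(\wb1uv)$ for the other pattern). In Case~I the edges $u\va(uv)$ and $v\va(vu)$ are double, so $W$ makes two separate excursions, one leaving and returning to $u$ and one leaving and returning to $v$; the trace produces $\wb2uv$ and $\wb2vu$ (respectively $\revb(\wb2uv)$ and $\revb(\wb2vu)$ for the other pattern). Applying $\autb(uv)$ to the Case~II conclusion, and using $\revb(\wb2uv)=\autb(uv)(\wb2uv)$ in Case~I, then gives the full statement.

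The main obstacle is the orientation-and-trace bookkeeping of the last paragraph: confirming that the purely local ``one in, one out'' condition genuinely propagates around $\gb_{uv}$ to pin down every internal solo-edge direction up to a single binary choice, and then chasing the passages through all four internal vertices to verify that the resulting walk is precisely $\wb1uv$ (respectively the pair $\wb2uv,\wb2vu$) and not some other edge-$2$-bounded retraction-free route. These are finite checks, since $\gb_{uv}$ has only four internal vertices, but they must be carried out carefully, keeping the direction conventions of the vertex-sequence notation for walks straight throughout.
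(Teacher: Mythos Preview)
Your approach is correct and essentially the same as the paper's: both split on whether $\vd(uv)\vd(vu)$ is a double edge, determine the matching inside $\gb_{uv}$, exploit $\autb(uv)$ and reversal to reduce to one representative, and then apply the tracing procedure. The only cosmetic difference is that where you plan to propagate the ``one in, one out'' constraint vertex by vertex, the paper observes directly that the solo edges in $\gb_{uv}$ form a single $4$-cycle (Case~I) or a single $5$-edge path from $u$ to $v$ (Case~II), so a consistent orientation is forced up to reversal in one step; this shortcut would spare you most of the bookkeeping you flag as the main obstacle.
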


\begin{proof}
If $\vd(uv) \vd(vu)$ is a double edge of $W$, then $u \va(uv)$ and $v \va(vu)$ are also double edges.
The solo edges in $\gb_{uv}$ form a single cycle, which must be oriented consistently, as either $(\va(uv) \vd(uv) \va(vu) \vd(vu))$ or its reverse.  Applying the tracing procedure described above, (b) holds.

If $\vd(uv) \vd(vu)$ is not a double edge, the set of double edges in $\gb_{uv}$ is either $\{\va(uv) \vd(uv), \vd(vu) \va(vu)\}$ or $\{\va(uv) \vd(vu), \vd(uv) \va(vu)\}$.  By symmetry (from $\autb(uv)$) we may assume the former.  The solo edges form a single path
$u \va(uv) \vd(vu) \vd(uv) \va(uv) v$ which must be oriented in this direction or its reverse.  Applying the tracing procedure, (a) holds.
\end{proof}

Thus, Lemma \ref{thm:walkgb} says that up to symmetry or reversal a CPRS walk must pass through $\gb_{uv}$ in one of the ways shown on the right in Figure \ref{fig:consgb}.

\begin{figure}[t]
\realline{\hfill
	\raise57pt\hbox{\includegraphics[scale=1.27]{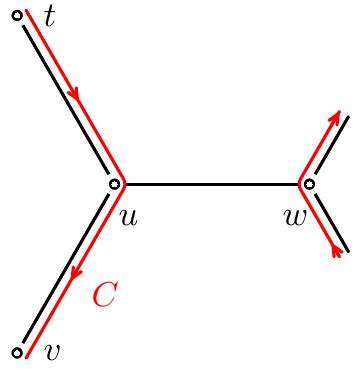}}%
	\hfill\hfill
	\raise120pt\hbox{$\to$}%
	\hfill\hfill
	\hbox{\includegraphics[scale=1.27]{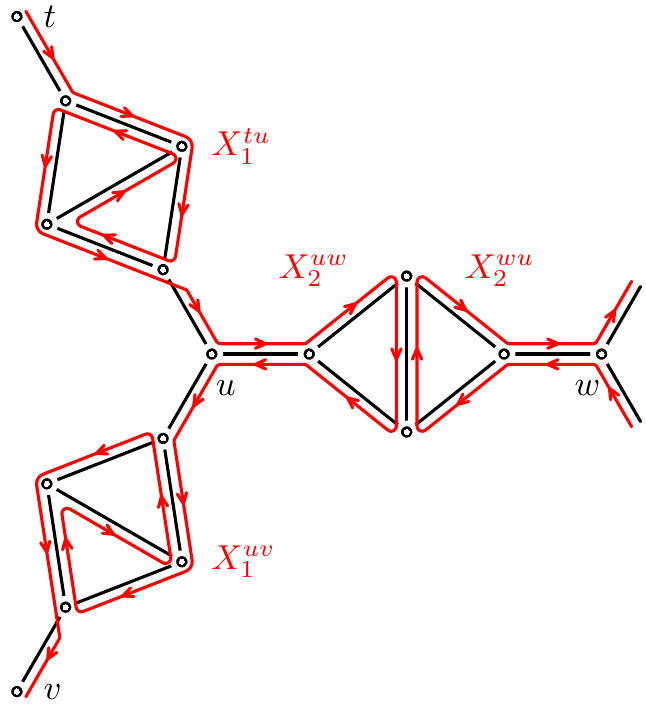}}%
	\hfill
}
\caption{Constructing a CPRS walk in $\gb$ from a hamilton cycle in $\ga$.}
\label{fig:conswb}
\end{figure}

\begin{proposition}
\label{thm:thm2c}
For $\ga$ and $\gb$ as in Construction \ref{thm:consgb}, the following are equivalent.

{\leftskip=\parindent

 \noindent (a) $\ga$ has a hamilton cycle.

 \noindent (b) $\gb$ has a Chinese postman reporter strand walk.

}
\end{proposition}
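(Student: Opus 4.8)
My approach would be to pass back and forth between CPRS walks of $\gb$ and hamilton cycles of $\ga$ using the local picture of Lemma~\ref{thm:walkgb}, which says that a CPRS walk traverses each gadget $\gb_{uv}$ in exactly one of two ways. Throughout I would rely on Corollary~\ref{thm:cubiccprs}: a closed walk in a $2$-connected cubic graph is a CPRS walk if and only if it is edge-spanning, orientable, retraction-free, and its double edges form a perfect matching. The dictionary I expect to emerge is ``$\gb_{uv}$ traversed in way (a) $\leftrightarrow$ $uv$ lies on a hamilton cycle of $\ga$, and $\gb_{uv}$ traversed in way (b) $\leftrightarrow$ $uv$ lies in the complementary perfect matching.''

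For (b)~$\Rightarrow$~(a): let $W$ be a CPRS walk of $\gb$. Inspecting the walks listed in Lemma~\ref{thm:walkgb}, $\gb_{uv}$ is traversed in way (b) precisely when $u\va(uv)$ (equivalently $v\va(vu)$) is a double edge of $W$, and otherwise both $u\va(uv)$ and $v\va(vu)$ are solo. Since the double edges of $W$ form a perfect matching of $\gb$, each $u\in V(\ga)$ meets exactly one double edge of $W$ — necessarily some $u\va(uv)$ — so exactly one gadget incident with $u$ is traversed in way (b). Hence the edges $uv$ of $\ga$ whose gadget is traversed in way (b) form a perfect matching $M$ of $\ga$, and $C=\ga-M$ is a spanning $2$-regular subgraph. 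Finally, because $W$ is connected and edge-spanning, and within each way-(b) gadget $W$ only performs closed excursions based at $u$ and at $v$ that visit no other vertex of $\ga$, the way-(a) edges — that is, $E(C)$ — form a connected subgraph; a connected spanning $2$-regular graph is a hamilton cycle.

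For (a)~$\Rightarrow$~(b): given a hamilton cycle $C=u_0u_1\cdots u_{n-1}u_0$ of $\ga$, let $M=E(\ga)\setminus E(C)$, which is a perfect matching since $\ga$ is cubic, and let $w_i$ be the $M$-neighbour of $u_i$. I would build the closed walk
\[
W=\wb2{u_0}{w_0}\cc\wb1{u_0}{u_1}\cc\wb2{u_1}{w_1}\cc\wb1{u_1}{u_2}\cc\cdots\cc\wb2{u_{n-1}}{w_{n-1}}\cc\wb1{u_{n-1}}{u_0},
\]
where each $\wb1{u_i}{u_{i+1}}$ is taken as written in Lemma~\ref{thm:walkgb} (so it runs from $u_i$ to $u_{i+1}$) and each $\wb2{u_i}{w_i}$ is a closed excursion based at $u_i$; the consecutive endpoints match, so $W$ is a single closed walk. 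I would then check the four conditions of Corollary~\ref{thm:cubiccprs}: \emph{edge-spanning}, since each way-(a) gadget $\gb_{u_iu_{i+1}}$ is covered by its $\wb1$-piece and each way-(b) gadget $\gb_{uv}$ with $uv\in M$ is covered by $\wb2uv$ together with $\wb2vu$, both of which occur in $W$ because $C$ passes through $u$ and through $v$; \emph{orientable} and \emph{retraction-free}, since each gadget edge is touched only by that gadget's pieces, and a direct check of the explicit forms and of the junctions at each $u_i$ (where the incident half-edges used are all distinct) rules out repeated directions and retractions; and \emph{double edges forming a perfect matching of $\gb$}, namely $\va(uv)\vd(uv)$ and $\va(vu)\vd(vu)$ for each $uv\in E(C)$ together with $u\va(uv)$, $v\va(vu)$ and $\vd(uv)\vd(vu)$ for each $uv\in M$, which a vertex-by-vertex count confirms. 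I expect this last direction to be where the work lies: orientability, retraction-freeness, and the perfect-matching property of the double edges all hinge on the precise forms of $\wb1uv$ and $\wb2uv$ and on how consecutive pieces meet at the $u_i$, so a careless choice of orientations or gluing order would break one of them; by contrast, in the other direction the only subtle point is that the perfect-matching hypothesis in Corollary~\ref{thm:cubiccprs} forces \emph{exactly} one gadget at each vertex to be traversed in way (b), which is what makes $M$ a perfect matching rather than merely a matching.
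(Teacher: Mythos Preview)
Your proposal is correct and follows essentially the same approach as the paper: both directions build the same dictionary between way-(a)/way-(b) traversals of $\gb_{uv}$ and hamilton/matching edges of $\ga$, and your construction of $W$ in (a)~$\Rightarrow$~(b) is exactly the paper's (splice $\wb2{u}{w}$ at each $u$, replace each cycle edge by $\wb1{u}{v}$). The only cosmetic difference is in (b)~$\Rightarrow$~(a): the paper reads the hamilton cycle off directly from $\ein(Wu),\eout(Wu),\db(Wu)$ at each $u\in V(\ga)$, whereas you first extract the perfect matching $M$ to get a spanning $2$-factor and then argue connectedness from the fact that way-(b) pieces are closed excursions---same content, slightly different packaging.
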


\begin{proof}
Suppose $\ga$ has a hamilton cycle, $C$. First, replace each (directed) edge $uv$ of $C$ by the walk $\wb1uv$.  This gives a walk in $\gb$ that uses every vertex of $N$ once.  Now for each edge $uw \in E(\ga)-E(C)$ splice $\wb2uw$ into this walk at $u$, and splice $\wb2wu$ into this walk at $w$.  The result is a CPRS walk in $\gb$.  See Figure \ref{fig:conswb}.

Conversely, suppose $\gb$ has a CPRS walk $W$.  By Lemma \ref{thm:walkgb}, at each $u \in V(\ga)$,
$\ein(Wu)$ belongs to some subwalk $\ww1tu = \wb1tu$ or $\autb(tu)(\wb1tu)$ of $W$,
$\eout(Wu)$ belongs to some $\ww1uv = \wb1uv$ or $\autb(uv)(\wb1uv)$,
and both occurrences of $\db(Wu)$ belong to some $\ww2uw = \wb2uw$ or $\revb(\wb2uw)$, where $t,v,w$ are the neighbors of $u$ in $N$.
Thus, deleting all subwalks $\ww2uw$ and replacing each subwalk $\ww1uv$ by the edge $uv$ of $\ga$ gives a hamilton cycle in $\ga$.
\end{proof}

Construction \ref{thm:consgb} therefore gives a polynomial time transformation from the hamilton cycle problem for $3$-connected cubic planar simple graphs, which is NP-complete \cite{GJT76}, to the \textsc{CPRS Walk} problem for $2$-connected cubic planar simple graphs.  This yields the following theorem.

\begin{theorem}
\label{thm:npc2c}
The problems \textsc{Shortest Reporter Strand Walk} and \textsc{Chinese Postman Reporter Strand Walk} are NP-complete for $2$-connected cubic planar simple graphs.
\end{theorem}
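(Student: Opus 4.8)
The plan is to establish Theorem \ref{thm:npc2c} as an immediate corollary of the machinery already assembled in this section. First I would observe that Construction \ref{thm:consgb} is clearly computable in polynomial time: given $\ga$ with $n$ vertices and $m = 3n/2$ edges, we replace each edge by a fixed-size gadget $\gb_{uv}$ (four new vertices, three new edges), so $\gb$ has $O(n)$ vertices and can be written down in linear time. The Claim inside Construction \ref{thm:consgb} (proved there via Lemma \ref{thm:cubicop2}) guarantees that $\gb$ lies in the target class: $\gb$ is a $2$-connected cubic planar simple graph whenever $\ga$ is $3$-connected cubic planar simple. So the map $\ga \mapsto \gb$ is a valid polynomial-time reduction between the stated graph classes, provided it preserves the relevant yes/no answer.

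Next I would invoke Proposition \ref{thm:thm2c}, which says precisely that $\ga$ has a hamilton cycle if and only if $\gb$ has a CPRS walk. Since the hamilton cycle problem for $3$-connected cubic planar simple graphs is NP-complete by \cite{GJT76}, composing that reduction with $\ga \mapsto \gb$ shows that \textsc{CPRS Walk} is NP-hard for $2$-connected cubic planar simple graphs. Combined with the earlier observation in this section that \textsc{CPRS Walk} is in NP (verify a certificate embedding has facial walk of length $\cp(\gb)$, using that $\cp$ is polynomial-time computable by \cite{EJ73}), this gives NP-completeness of \textsc{CPRS Walk} on that class.

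Finally I would transfer the result to \textsc{SRS Walk} using the argument already spelled out right after the definition of the \textsc{CPRS Walk} problem: the instance $G$ of \textsc{CPRS Walk} maps in polynomial time to the instance $(G, \cp(G))$ of \textsc{SRS Walk}, and $G$ is a yes-instance of the former exactly when $(G,\cp(G))$ is a yes-instance of the latter. Hence NP-hardness of \textsc{CPRS Walk} on $2$-connected cubic planar simple graphs yields NP-hardness of \textsc{SRS Walk} on the same class, and \textsc{SRS Walk} is in NP because a yes-instance is certified by an embedding. This completes the proof.

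Honestly, there is no substantive obstacle left at this point — every nontrivial ingredient (the gadget analysis in Lemma \ref{thm:walkgb}, the equivalence in Proposition \ref{thm:thm2c}, the NP membership remarks, and the \textsc{CPRS}-to-\textsc{SRS} transfer) has already been carried out in the preceding text. The only thing to be careful about is bookkeeping: making explicit that the composition of two polynomial-time many-one reductions is again one, that the source problem \cite{GJT76} and the intermediate class match up (both $3$-connected cubic planar simple), and that the target class in the statement ($2$-connected cubic planar simple) is exactly what Lemma \ref{thm:cubicop2} and the Claim deliver for $\gb$.
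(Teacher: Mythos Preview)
Your proposal is correct and matches the paper's own treatment exactly: the paper states Theorem~\ref{thm:npc2c} as an immediate consequence of Construction~\ref{thm:consgb}, Proposition~\ref{thm:thm2c}, and the NP-completeness result of \cite{GJT76}, together with the earlier remarks on NP membership and the \textsc{CPRS}-to-\textsc{SRS} transfer. You have simply written out explicitly what the paper leaves as a one-sentence observation preceding the theorem.
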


\section{A stronger NP-completeness result}
\label{sec:3c}

In this section we show that the problems \textsc{SRS Walk} and \textsc{CPRS Walk} are NP-complete even for $3$-connected planar graphs.

\subsection{Achieving $3$-connectedness}

While Section \ref{sec:2c} provides a simple proof of NP-completeness for the problems \textsc{SRS Walk} and \textsc{CPRS Walk}, the class of graphs that it uses does not have a stable $3$-dimensional structure, so they are not likely to occur in situations where we design a DNA molecule to have a specified geometric embedding in space.
In particular, the graphs $\gb$ produced by Construction \ref{thm:consgb} have connectivity $2$, while the graph formed by the edges of any polyhedron in $3$-dimensional space is $3$-connected.
Theorem \ref{thm:npc2c} leaves open the possibility that \textsc{SRS Walk} and \textsc{CPRS Walk} can be solved easily for $3$-connected graphs, or even that all $3$-connected graphs with more than two vertices have a CPRS walk.  Here we show that for $3$-connected graphs (in fact, $3$-connected cubic planar graphs) the problems \textsc{SRS Walk} and \textsc{CPRS Walk} are NP-complete, and hence unlikely to have polynomial-time solutions.
The construction in our proof yields arbitrarily large $3$-connected cubic planar graphs that do not have a CPRS walk.

First we modify the graph $\gb$ from Construction \ref{thm:consgb} to obtain a new graph $\gc$ with improved connectivity, in Construction \ref{thm:consgc}.
However, CPRS walks in $\gc$ do not necessarily correspond to CPRS walks in $\gb$, so later we further modify $\gc$ into a graph $\gd$ where we can control the CPRS walks so that they do correspond to CPRS walks in $\gb$, and hence to hamilton cycles in $\ga$.

Given a graph $G$ with a plane embedding, let $\cwn_G(u,v)$ denote the neighbor of $u$ that is immediately clockwise from $v$ in the rotation around $u$.

\begin{construction}
\label{thm:consgc}
Suppose we have $\ga$ and $\gb$ as in Construction \ref{thm:consgb}.
To construct $\gc$, take a plane embedding of $\ga$, and a corresponding plane embedding of $\gb$ in which each $4$-cycle $(\va(uv) \vd(uv) \va(vu) \vd(vu))$ is clockwise.
Replace each edge $\va(uv) \vd(uv)$ of $\gb$ by a path $\va(uv) \vb(uv) \vc(uv) \vd(uv)$ involving two new vertices $\vb(uv), \vc(uv)$.
Then incident to each vertex $\vc(uv)$ add a \emph{bracing edge}
$\vc(uv) \vb(vw)$ where $w = \cwn_\ga(v,u)$.
See Figure \ref{fig:consgc}.
\end{construction}

\begin{figure}[t]
\realline{\hfill%
	\includegraphics[scale=1.27]{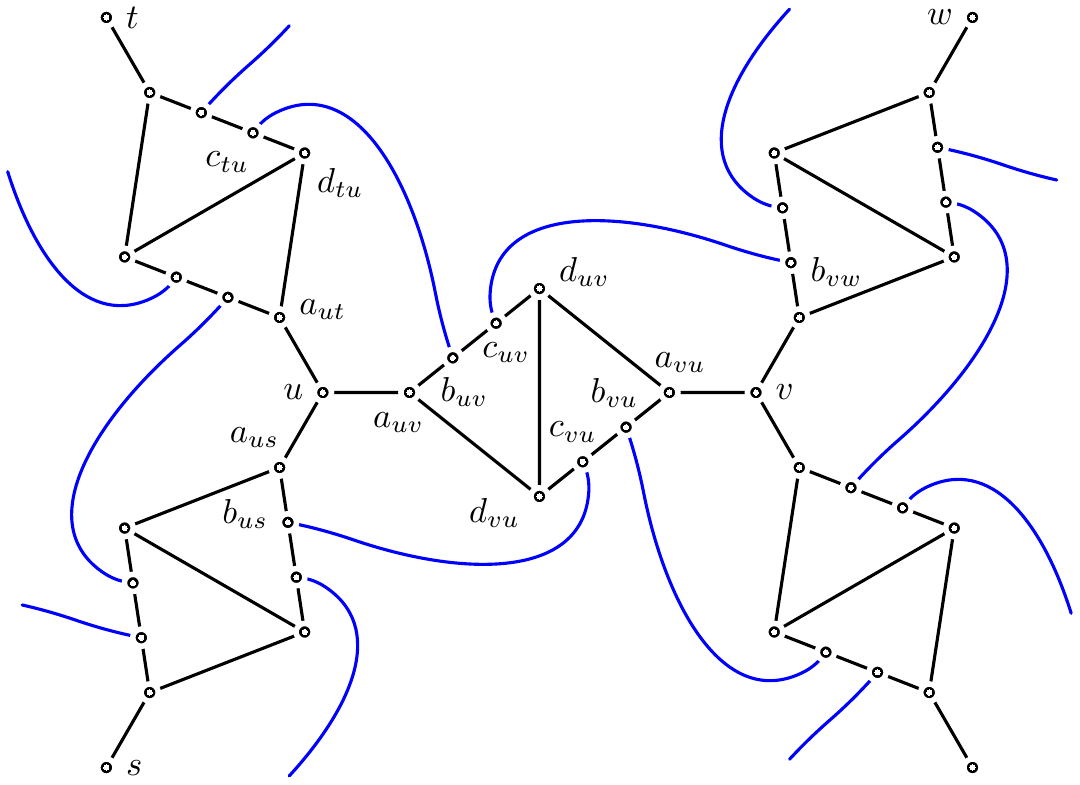}%
	\hfill}
\caption{Construction of $\gc$.}
\label{fig:consgc}
\end{figure}

Given a graph $G$, define a relation $\ethg$, or just $\eth$, on $V(G)$ by $u \eth v$ when there are three edge-disjoint $uv$-paths in $G$.
Therefore, by the edge version of Menger's Theorem (see \cite[Theorem 4.2.19]{West}), $u\eth v$ if and only if no set of fewer than $3$ edges separates $u$ and $v$.  It follows that $G$ is $3$-edge-connected precisely when all vertices of $G$ are $\eth$-equivalent.

\begin{lemma}\label{thm:e3}
$\eth$ is an equivalence relation.
\end{lemma}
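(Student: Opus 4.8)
The plan is to work throughout with the characterization noted just before the lemma statement: $u \eth v$ holds if and only if no set of fewer than $3$ edges of $G$ separates $u$ and $v$, meaning that deleting those edges leaves $u$ and $v$ in different components. Reflexivity is then immediate, since no set of edges can place a single vertex in two distinct components, so $u \eth u$ for every $u \in V(G)$. Symmetry is equally clear, because the condition ``$F$ separates $u$ and $v$'' depends only on the unordered pair $\{u,v\}$.

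The substance of the lemma is transitivity. I would argue by contradiction. Suppose $u \eth v$ and $v \eth w$, but that $u \eth w$ fails; then there is an edge set $F$ with $|F| \le 2$ such that $u$ and $w$ lie in different components of $G - F$. These two components are distinct, so $v$ cannot lie in both of them; hence $v$ lies in a different component of $G - F$ from at least one of $u$ and $w$. If $v$ and $u$ are in different components of $G-F$, then $F$ separates $u$ and $v$ with $|F| < 3$, contradicting $u \eth v$; if $v$ and $w$ are in different components, then $F$ separates $v$ and $w$ with $|F| < 3$, contradicting $v \eth w$. In either case we reach a contradiction, so $u \eth w$, completing the proof.

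I do not expect any real obstacle here: this is the familiar fact that the local edge-connectivity satisfies $\lambda_G(u,w) \ge \min(\lambda_G(u,v), \lambda_G(v,w))$, specialized to the threshold $3$. The only points requiring a little care are that $G$ need not be connected and that $v$ might lie in a third component of $G - F$ distinct from those of $u$ and $w$; the argument above handles both cases uniformly, since all that is used is that $v$ fails to share a component with at least one of $u$, $w$. One could instead prove transitivity directly by surgery on the edge-disjoint paths (an uncrossing argument producing three edge-disjoint $uw$-paths from three edge-disjoint $uv$-paths and three edge-disjoint $vw$-paths), but routing through Menger's theorem as above is shorter and avoids that bookkeeping, so that is the route I would take.
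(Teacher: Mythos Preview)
Your proof is correct and follows essentially the same route as the paper: both use the Menger-type characterization to handle transitivity by observing that a small cut separating $u$ and $w$ must separate $v$ from one of them. The only cosmetic difference is that the paper dispatches reflexivity via the path definition (three copies of the trivial walk) rather than the cut characterization, but the substance is identical.
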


\begin{proof} $\eth$ is reflexive (take three copies of the trivial walk at a vertex) and clearly symmetric; we must show it is transitive.  Suppose that $u\eth v$ and $v\eth w$.  If we do not have $u\eth w$ then some set of fewer than $3$ edges separates $u$ and $w$.  But then this set either separates $u$ and $v$, contradicting $u\eth v$, or $v$ and $w$, contradicting $v\eth w$.  Hence, $u\eth w$.
\end{proof}

\begin{lemma}
\label{thm:gc3c}
The graph $\gc$ is $3$-connected, planar, and simple.
\end{lemma}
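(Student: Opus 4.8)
The plan is to establish the three properties separately: simplicity by direct inspection, $3$-connectedness via the relation $\eth$, and planarity by adding the bracing edges to the plane embedding of $\gb$ face by face.

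By the claim in Construction~\ref{thm:consgb}, $\gb$ is cubic, planar and simple. The graph $\gc$ is obtained from $\gb$ by replacing the two edges $\va(uv)\vd(uv)$ and $\va(vu)\vd(vu)$ of each gadget $\gb_{uv}$ by paths with two new internal vertices each, and then attaching to each vertex $\vc(uv)$ one bracing edge $\vc(uv)\vb(vw)$ with $w=\cwn_\ga(v,u)$. Since $\ga$ is cubic, $(u,v)\mapsto(v,\cwn_\ga(v,u))$ is a bijection on the ordered pairs of adjacent vertices of $\ga$, so each $b$-vertex also lies on exactly one bracing edge; as $\va(uv),\vd(uv),\va(vu),\vd(vu)$ keep degree $3$ and each of $\vb(uv),\vc(uv),\vb(vu),\vc(vu)$ now has degree $3$, the graph $\gc$ is cubic. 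Subdivision preserves simplicity, and a bracing edge $\vc(uv)\vb(vw)$ joins a vertex of $\gb_{uv}$ to a vertex of the distinct gadget $\gb_{vw}$ (distinct because $w\ne u$), so it is neither a loop nor parallel to a gadget or subdivided edge; distinct bracing edges have distinct $\vc$-ends, hence no two are parallel. So $\gc$ is simple.

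Because $\gc$ is cubic, it is $3$-connected if and only if it is $3$-edge-connected (for a cubic graph, if $\{x,y\}$ were a $2$-vertex-cut then $G-\{x,y\}$ would have exactly two parts $C_1,C_2$ and no edge $xy$, with $x$ sending $x_1,x_2$ edges and $y$ sending $x_2,x_1$ edges to $C_1,C_2$; the cut around $C_1\cup\{x\}$ would then have only $2x_2$ edges, forcing $x_2\ge 2$ and symmetrically $x_1\ge 2$, impossible since $x_1+x_2=3$). By the observation just before Lemma~\ref{thm:e3} it therefore suffices to show that all vertices of $\gc$ are $\eth$-equivalent in $\gc$, which I would do in two steps using the transitivity of $\eth$ (Lemma~\ref{thm:e3}). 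First, three edge-disjoint $uv$-paths in $\ga$ lift to three edge-disjoint paths in $\gc$, because distinct edges of $\ga$ become vertex-disjoint gadgets, so the $3$-edge-connectedness of $\ga$ gives $u\eth v$ in $\gc$ for all $u,v\in V(\ga)$. Second, for each edge $uv$ of $\ga$ and each of the eight vertices of $\gb_{uv}$ I would exhibit three edge-disjoint paths to $u$: one running inside $\gb_{uv}$ (along the $8$-cycle, the diagonal $\vd(uv)\vd(vu)$ and the pendant edges) out to $u$; one running inside $\gb_{uv}$, out the pendant at $v$, and then along a lifted $vu$-path of $\ga$ (using $u\eth v$); and one that leaves $\gb_{uv}$ through a bracing edge into a neighbouring gadget and reaches $u$ from there (necessarily a bracing edge, since apart from the two pendants the only edges out of $\gb_{uv}\cup\{u,v\}$ are the bracing edges at $\vb(uv),\vc(uv),\vb(vu),\vc(vu)$). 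The first two paths avoid the bracing edges, and the connectivity of $\ga$ (specifically, that $\ga$ minus $uv$ and one further edge at $u$ is still connected) allows the external portions of all three to be chosen edge-disjoint. By transitivity all of $V(\gc)$ is then one $\eth$-class, so $\gc$ is $3$-edge-connected, hence $3$-connected.

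For planarity I would begin from the plane embedding of $\gb$ in which every $4$-cycle $(\va(uv)\vd(uv)\va(vu)\vd(vu))$ is clockwise, as in Construction~\ref{thm:consgc}, and subdivide its edges; this embedding is still plane, with the same faces. It remains to insert the bracing edges without crossings. A face $F$ of $\ga$ corresponds to a face $F'$ of the subdivided $\gb$ whose boundary, around each corner $v$ of $F$, runs along one gadget up to $v$ and then out along the next gadget; using the clockwise-$4$-cycle convention to fix the embedding of each gadget, one checks that the vertex $\vc(uv)$ lying near the $v$-end of the first gadget-segment and the vertex $\vb(vw)$ lying near the $v$-end of the second (where $w=\cwn_\ga(v,u)$, so $uv$ and $vw$ are consecutive at $v$) are exactly the ends of a single bracing edge. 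Hence every bracing edge has both ends on one face $F'$, and within $F'$ the bracing edges cut across its corners along pairwise-disjoint arcs, so they can all be drawn inside $F'$ without crossings (compare Figure~\ref{fig:consgc}); thus $\gc$ is planar. The point I expect to need the most care is the second step of the $\eth$-argument: for the vertices $\vb(uv),\vc(uv),\vb(vu),\vc(vu)$, which have degree $2$ within the gadget, one must route the third path out through a bracing edge and back while keeping it edge-disjoint from the two paths that stay in the gadget, which forces a careful use of the connectivity of $\ga$. The other delicate point is the face identification in the planarity argument, i.e.\ making the clockwise orientations of the gadgets consistent with the plane embedding of $\ga$ so that the $\vc$- and $\vb$-ends match up corner by corner.
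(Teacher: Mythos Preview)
Your overall strategy coincides with the paper's: handle planarity and simplicity directly, reduce $3$-connectedness to $3$-edge-connectedness (valid for cubic graphs), and then show every vertex is $\eth$-equivalent to an original vertex of $\ga$, with original vertices handled by lifting paths from $\ga$. The paper in fact dispenses with planarity and simplicity in one sentence (deferring to Figure~\ref{fig:consgc}), so your more careful face-by-face planarity argument is extra work but perfectly fine.

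Where you diverge is in how you link new vertices to $u$. The paper does not use lifted $\ga$-paths at all: for each of $\va(uv),\vb(uv),\vd(uv),\vc(vu)$ it simply writes down three short explicit paths to $u$, each staying inside at most two adjacent gadgets and using at most one bracing edge. This is much cleaner than your recipe, and it also avoids a gap in your plan. Your path~3 is supposed to leave $\gb_{uv}$ through a bracing edge and ``reach $u$ from there,'' but for $\vc(uv)$ the only bracing edge goes to $\vb(vw)$ in a gadget incident with $v$, not $u$; reaching $u$ from there is not local, and your stated connectivity fact (that $\ga$ minus $uv$ and one further edge at $u$ is connected) provides only one external $vu$-path, whereas in this case both path~2 and path~3 must travel externally from the $v$-side back to $u$ along edge-disjoint routes. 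The paper sidesteps this entirely by noting that it is more convenient to prove $\vc(vu)\eth u$ (whose bracing edge goes to $\vb(us)$, adjacent to $u$) rather than $\vc(uv)\eth u$; by the $u\leftrightarrow v$ symmetry of the gadget this covers all $c$-vertices. You could repair your argument the same way, or equivalently by showing $\vc(uv)\eth v$ and invoking transitivity.
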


\begin{proof}
Clearly $\gc$ is planar and simple (see Figure \ref{fig:consgc}).
For cubic graphs such as $\ga$ and $\gc$, $3$-connected\-ness is equivalent to $3$-edge-connected\-ness, which is equivalent to showing that all vertices are $\eth$-equivalent.

Vertices of $\gc$ are either \emph{original} vertices, namely vertices of $\ga$, or \emph{new} vertices, added by Constructions \ref{thm:consgb} and \ref{thm:consgc}.
If $u$ and $v$ are original vertices then there are three edge-disjoint $uv$-paths in $\ga$, which easily provide three edge-disjoint $uv$-paths in $\gc$.  Hence all original vertices are $\ethgc$-equivalent.
So it suffices to show that each new vertex is $\ethgc$-equivalent to some original vertex.

Suppose that in the plane embedding of $\ga$, the neighbors of $u$ are $s, t, v$ in clockwise order.  The following paths from new vertices of $\gc$ to the original vertex $u$ (see Figure \ref{fig:consgc}) show that $\va(uv)$, $\vb(uv)$ and $\vd(uv)$ are $\ethgc$-equivalent to $u$:

{\advance\leftskip by\parindent

\noindent $\va(uv)u$-paths:\quad
$\va(uv) u$,\quad
$\va(uv) \vb(uv) \vc(tu) \vd(tu) \va(ut) u$,\quad
$\va(uv) \vd(vu) \vc(vu) \vb(us) \va(us) u$.

\noindent $\vb(uv)u$-paths:\quad
$\vb(uv) \va(uv) u$,\quad
$\vb(uv) \vc(uv) \vd(uv) \vd(vu) \vc(vu) \vb(us) \va(us) u$,\quad
$\vb(uv) \vc(tu) \vd(tu) \va(ut) u$.

\noindent $\vd(uv)u$-paths:\quad
$\vd(uv) \vd(vu) \va(uv) u$,\quad
$\vd(uv) \vc(uv) \vb(uv) \vc(tu) \vd(tu) \va(ut) u$,\quad
$\vd(uv) \va(vu) \vb(vu) \vc(vu) \vb(us) \va(us) u$.

}

\noindent
Rather than $\vc(uv)$ it is more convenient to show that $\vc(vu)$ is $\ethgc$-equivalent to $u$:

{\advance\leftskip by\parindent

\noindent $\vc(vu)u$-paths:\quad
$\vc(vu) \vd(vu) \va(uv) u$,\quad
$\vc(vu) \vb(us) \va(us) u$,\quad
$\vc(vu) \vb(vu) \va(vu) \vd(uv) \vc(uv) \vb(uv) \vc(tu) \vd(tu) \va(ut) u$.

}

\noindent
Since every new vertex is $\va(uv)$, $\vb(uv)$, $\vd(uv)$ or $\vc(vu)$ for some choice of $u$ and $v$, every new vertex is $\ethgc$-equivalent to an original vertex, as required.
\end{proof}

\subsection{Controlling walks}

A connected graph $H$ with three vertices $v_1, v_2, v_3$ of degree $2$ and all other vertices of degree $3$ is called a \emph{vertex gadget}.
If $G$ is a graph disjoint from $H$ and $u \in V(G)$ has degree $3$ with neighbors $u_1, u_2, u_3$, then we say the graph
$J = (G-u) \cup H \cup \{u_1v_1, u_2v_2, u_3v_3\}$
is obtained by \emph{replacing $u$ in $G$ by $H$}.
The \emph{cubic completion} of $H$ is $H^+ = H \cup \{vv_1, vv_2, vv_3\}$ where $v$ is a new vertex.  We leave the proof of the following straightforward result to the reader.

\begin{lemma}
\label{thm:cubicop3}
Let $G$ be a cubic graph, and $H$ a vertex gadget.  Construct $J$ by replacing a vertex of $G$ with $H$.  Then $J$ is cubic.
If $G$ and the cubic completion $H^+$ are both $3$-connected, planar, and simple, then
$J$ is $3$-connected, planar, and simple.
\end{lemma}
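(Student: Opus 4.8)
The plan is to check the three assertions --- $J$ cubic, $J$ planar and simple, $J$ $3$-connected --- in turn, the first two being routine and the last being the real content. For cubicness I would just count degrees: in $G-u$ the neighbours $u_1,u_2,u_3$ of the deleted vertex drop to degree $2$ and all other vertices keep degree $3$, while in $H$ exactly $v_1,v_2,v_3$ have degree $2$; adding the three edges $u_iv_i$ raises precisely these six vertices back to degree $3$. Simplicity is equally mechanical: $G-u$ and $H$ are simple as subgraphs of the simple graphs $G$ and $H^+$, and each new edge $u_iv_i$ runs between the disjoint vertex sets $V(G-u)$ and $V(H)$, so it is neither a loop nor parallel to an old edge; the three new edges are pairwise distinct because $v_1,v_2,v_3$ are distinct and $u_1,u_2,u_3$ are distinct (the latter since $G$ is simple).

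For planarity I would argue by cutting and pasting disks. Fix plane (equivalently, sphere) embeddings of $G$ and of $H^+$. Around $u$ the edges to $u_1,u_2,u_3$ occur in some cyclic order, and around $v$ the edges to $v_1,v_2,v_3$ occur in some cyclic order; since reflecting a planar embedding reverses all rotations, after possibly reflecting $H^+$ these two cyclic orders list the indices $1,2,3$ in the same way. Now excise a small open disk around $u$ that meets $G$ only in $u$ and short initial arcs of its three edges; what remains is a drawing of $G-u$ in a closed disk with three free edge-ends labelled $u_1,u_2,u_3$ on the boundary circle in that cyclic order. Do the same at $v$ in $H^+$, obtaining $H$ in a closed disk with free ends $v_1,v_2,v_3$ on its boundary in the matching order. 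Glue the two boundary circles, identifying the free end $u_i$ with $v_i$; this is possible exactly because the cyclic orders agree, the sphere reappears as the union of the two disks, and each glued pair of free ends becomes the edge $u_iv_i$. The result is a plane embedding of $J$.

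The hard part is $3$-connectedness. Since $J$ is cubic this is the same as $3$-edge-connectedness, which by Lemma \ref{thm:e3} amounts to showing that all vertices of $J$ are $\eth$-equivalent in $J$, exactly as in the proof of Lemma \ref{thm:gc3c}. The two facts I would lean on are: a degree-$3$ vertex is an interior vertex of at most one member of any family of edge-disjoint paths, and three edge-disjoint paths between two vertices of a cubic $3$-edge-connected graph must use all three edges at each endpoint. Using these I would proceed in three steps. (i) Any two vertices $x,y$ of $G-u$ are $\eth$-equivalent in $J$: take three edge-disjoint $x$--$y$ paths in the $3$-edge-connected graph $G$; at most one passes through $u$, and that one, which enters $u$ along some $uu_i$ and leaves along some $uu_j$, can be rerouted as $\cdots u_i v_i \cdots v_j u_j \cdots$ through a $v_i$--$v_j$ path in the connected graph $H$, meeting neither of the other two paths (which lie wholly in $G-u$). (ii) Symmetrically, any two vertices of $H$ are $\eth$-equivalent in $J$, this time rerouting the at-most-one path through $v$ in $H^+$ through a path in the connected graph $G-u$. (iii) Finally $v_1$ and $u_1$ are $\eth$-equivalent in $J$: one path is the interface edge $v_1u_1$; for the other two, take three edge-disjoint $v_1$--$v$ paths in $H^+$ (one of which must be the single edge $v_1v$, since that is the only $v_1$--$v$ path using the edge $vv_1$, while the other two end $\cdots v_2v$ and $\cdots v_3v$) and three edge-disjoint $u_1$--$u$ paths in $G$ (likewise the edge $u_1u$, plus two ending $\cdots u_2u$ and $\cdots u_3u$), delete the degenerate $v$-edge and $u$-edge, and splice each surviving $H$-part to the surviving $(G-u)$-part with matching index along the interface edge $v_2u_2$ or $v_3u_3$; the three $v_1$--$u_1$ paths so obtained are edge-disjoint because their $H$-edges, their $(G-u)$-edges, and the interface edges they use are pairwise disjoint. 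Steps (i)--(iii) together with transitivity of $\eth$ (Lemma \ref{thm:e3}) put all vertices of $J$ in a single $\eth$-class, so $J$ is $3$-edge-connected and hence $3$-connected. The only place demanding care is the edge-disjointness bookkeeping in (iii) and in the reroutings of (i) and (ii) --- checking that each modified path really avoids the others --- which is precisely where the degree-$3$ structure is used.
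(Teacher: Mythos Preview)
Your argument is correct. The paper does not actually prove this lemma: it states that the result is straightforward and leaves the proof to the reader, so there is nothing to compare against beyond the surrounding methods. Your approach to $3$-connectedness---reducing to $3$-edge-connectedness via the cubic hypothesis and then verifying a single $\eth$-class using Lemma~\ref{thm:e3}, exactly as the paper does in proving Lemma~\ref{thm:gc3c}---is the natural one in this context and is presumably what the authors intended; the degree-$3$ bookkeeping in your steps (i)--(iii) is handled correctly.
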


Now we construct subgraphs in which the route taken by a CPRS walk is constrained in various ways.

\begin{figure}[t]
\realline{\hfill%
	\includegraphics[scale=1.27]{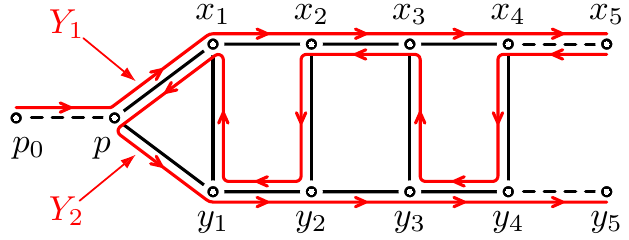}%
	\hfill}
\caption{Vertex gadget $\gs$, and how a CPRS walk passes through it.}
\label{fig:consgs}
\end{figure}

Let $\gs$ be the vertex gadget shown (with additional incident edges $\vpz \vp, \vxd \vxe, \vyd \vye$) in Figure \ref{fig:consgs}.
Let $\auts$ be the automorphism of $\gs$ that swaps the two paths $\vxa \vxb \vxc \vxd$ and $\vya \vyb \vyc \vyd$ while fixing $\vp$.
Note that the cubic completion $\gs^+$ is $3$-connected (to see this, observe that for every $v \in V(\gs^+)$, $\gs^+-v$ has a hamilton cycle and is therefore $2$-connected).  Also, $\gs^+$ is planar and simple.

\begin{lemma}
\label{thm:walkgs}
Suppose the vertex gadget $\gs$ described above is an induced subgraph of a $2$-connected cubic graph $G$.
Let $\ws1 = \vp \vxa \vxb \vxc \vxd$ and
$\ws2 = \vxd \vyd \vyc \vxc \vxb \vyb \vya \vxa \vp \vya \vyb \vyc
	\vyd$.
If $W$ is a CPRS walk in $G$ then $W$ passes through $\gs$ and its incident edges as two walks, either
$\vpz \vp \cc \ws1 \cc \vxd \vxe$ and
$\vxe \vxd \cc \ws2 \cc \vyd \vye$, or
$\vpz \vp \cc \auts(\ws1) \cc \vyd \vye$ and
$\vye \vyd \cc \auts(\ws2) \cc \vxd \vxe$,
or reversing both walks in one of these pairs.
\end{lemma}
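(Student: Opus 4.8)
The plan is to argue in the spirit of the proof of Lemma~\ref{thm:walkgb}, using Corollary~\ref{thm:cubiccprs} together with the tracing procedure described just before Lemma~\ref{thm:cubicop2}. Recall that a CPRS walk $W$ in a $2$-connected cubic graph is edge-spanning, orientable and retraction-free, that its double edges form a perfect matching $M$, and that at each vertex $v$ the walk uses $\db(Wv)$ once in each direction and passes through $v$ exactly via the edge sequences $\ein(Wv)\db(Wv)$ and $\db(Wv)\eout(Wv)$. So the lemma reduces to: (i) listing the possible restrictions of $M$ to $\gs$ and its three incident edges $\vpz\vp$, $\vxd\vxe$, $\vyd\vye$; and (ii) for each candidate, running the tracing procedure and keeping the completions that yield an edge-spanning walk.

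For step (i), I would first record a parity fact. Since $\gs$ is induced and meets the rest of $G$ only along those three edges, each traversal of one of them takes $W$ either into or out of $V(\gs)$, and these in- and out-traversals alternate around the closed walk $W$; hence the number $3+k$ of such traversals is even, where $k=|M\cap\{\vpz\vp,\vxd\vxe,\vyd\vye\}|$, so $k\in\{1,3\}$, and $W$ meets $\gs$ in exactly $(3+k)/2$ subwalks, each a path inside $\gs$ between two of the degree-$2$ vertices $\vp,\vxd,\vyd$. Since $M$ saturates the nine vertices of $\gs$, deleting the one (when $k=1$) or three (when $k=3$) of them matched along an incident edge leaves a small graph on which $M$ restricts to a perfect matching: the three-rung ladder $P_3\times K_2$ on $\{\vxa,\vxb,\vxc,\vya,\vyb,\vyc\}$ when $k=3$, and, up to the automorphism $\auts$, either $\gs-\vp$ (a $P_4\times K_2$) or $\gs-\vxd$ when $k=1$. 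These have $3$, $5$ and $1$ perfect matchings respectively (the remaining $k=1$ choice $\vyd\vye\in M$ being the $\auts$-image of $\vxd\vxe\in M$), so only about ten candidate matchings arise, fewer up to $\auts$.

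Step (ii) is the core. For each candidate $M$ I would run the tracing procedure on $\gs$, propagating the forced sequences $\ein(Wv)\db(Wv)$ and $\db(Wv)\eout(Wv)$ from vertex to vertex. I expect that in every case except the one in which $\vxd\vxe$ --- or, by $\auts$-symmetry, $\vyd\vye$ --- is the unique double incident edge, which forces the restriction of $M$ to $\gs$ and its incident edges to be $\{\vp\vxa,\vxb\vxc,\vya\vyb,\vyc\vyd,\vxd\vxe\}$ or its $\auts$-image, the tracing cannot be completed to an edge-spanning walk: any attempt either closes up into a closed subwalk lying wholly inside $\gs$, or produces subwalks that together omit an edge of $\gs$ --- both of which contradict $W$ being a single edge-spanning closed walk in $G$ (here one uses $G\neq\gs$, valid since $\gs$ has vertices of degree $2$). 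In the surviving case the only edge-spanning completions are precisely the two subwalks $\vpz\vp\cc\ws1\cc\vxd\vxe$ and $\vxe\vxd\cc\ws2\cc\vyd\vye$ together with their common reversal, and the $\auts$-mirrored case supplies the remaining two possibilities in the statement. (One can also check directly that the displayed pair is consistent --- edge-spanning, orientable and retraction-free on $\gs$ --- though that is not needed for the lemma.)

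The main obstacle I foresee is the bookkeeping: about ten candidate matchings, each requiring the tracing to be pushed far enough to reach a contradiction or the conclusion. Keeping the proof short depends on exploiting $\auts$ and on isolating the local picture near $\vp$ once $\db(Wp)$ is known, which lets several subcases be dispatched together; no idea beyond those already used in the proof of Lemma~\ref{thm:walkgb} is required.
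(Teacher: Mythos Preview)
Your plan is sound and would succeed: the core tools---Corollary~\ref{thm:cubiccprs} plus the tracing procedure---are exactly what the paper uses, and your identification of the unique surviving matching $\{\vp\vxa,\vya\vyb,\vxb\vxc,\vyc\vyd,\vxd\vxe\}$ (up to $\auts$) is correct. The paper organizes the case analysis differently, however. Instead of your parity count on incident edges followed by a global enumeration of the ten candidate matchings, it cases directly on whether $\vpz\vp$ is a double edge. When it is, the paper subcases on whether $\vxa\vya$ is double and then propagates matching constraints and solo-edge orientations step by step along the ladder, reaching a short non-spanning closed cycle (of length $4$, $6$, or $8$) in every branch; this disposes of your three $k=3$ matchings and your five $k=1,\ \vpz\vp\in M$ matchings together without listing them. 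When $\vpz\vp$ is solo, fixing $\db(W\vp)=\vp\vxa$ (by $\auts$-symmetry) forces the entire matching in a single pass down the ladder, so no enumeration is needed there either. Your parity observation is a nice addition---it explains a priori why the answer consists of exactly two subwalks---but it does not reduce the tracing work; the paper's organization around the status of $\vpz\vp$ is somewhat leaner because it aims straight at the property the gadget is designed to enforce.
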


\begin{proof}
Suppose first that $\vpz \vp$ is a double edge in $W$.
If $\vxa \vya$ is not a double edge then $\vxa \vxb$ and $\vya \vyb$ are double edges.  We have a triangle $(\vp \vxa \vya)$ of solo edges; we may assume its edges are oriented in that direction by $W$.
We have another path of solo edges $\vxc \vxb \vyb \vyc$.  If this is oriented as $\vyc \vyb \vxb \vxc$ then then the tracing procedure fails by finding a $4$-cycle $(\vxa \vya \vyb \vxb)$.  So it is oriented as $\vxc \vxb \vyb \vyc$.  If $\vxc \vyc$ is a double edge then the tracing algorithm fails by finding a $6$-cycle $(\vxa \vya \vyb \vyc \vxc \vxb)$.
Thus, $\vxc \vyc$ is a solo edge, it must be oriented as $\vyc \vxc$, $\vxc \vxd$ and $\vyc \vyd$ are double edges, and $\vxd \vyd$ is a solo edge.  If $\vxd \vyd$ is oriented as $\vxd \vyd$, then the tracing procedure fails by finding a $4$-cycle $(\vyc \vxc \vxd \vyd)$, and if it is oriented as $\vyd \vxd$, then the tracing algorithm fails by finding an $8$-cycle $(\vxa \vya \vyb \vyc \vyd \vxd \vxc \vxb)$.

If $\vxa \vya$ is a double edge we have a path of solo edges $\vyb \vya \vp \vxa \vxb$ which without loss of generality is oriented in that direction.
If $\vxb \vyb$ is a double edge, then the tracing procedure fails by finding the $4$-cycle $(\vyb \vya \vxa \vxb)$.
So $\vxb \vyb$ is a solo edge, it must be oriented as $\vxb \vyb$, $\vxb \vxc$ and $\vyb \vyc$ are double edges, and $\vxc \vyc$ is a single edge.  If $\vxc \vyc$ is oriented as $\vxc \vyc$ then our tracing algorithm fails by finding a $6$-cycle $(\vxc \vyc \vyb \vya \vxa \vyb)$, and if it is oriented as $\vyc \vxc$ then the tracing algorithm fails by finding a $4$-cycle $(\vyc \vxc \vxb \vyb)$.

Therefore, $\vpz \vp$ is a solo edge; without loss of generality, $\vpz \vp = \ein(W\vp)$.  By symmetry (from $\auts$) we may assume that $\db(W\vp) = \vp \vxa$.
Then $\vya \vyb$, $\vxb \vxc$, $\vyc \vyd$ and $\vxd \vxe$ must all be double edges.  The solo edges form a single path which is oriented
$\vpz \vp \vya \vxa \vxb \vyb \vyc \vxc \vxd \vyd \vye$.
Now the tracing procedure gives
$\vpz \vp \cc \ws1 \cc \vxd \vxe$ and
$\vxe \vxd \cc \ws2 \cc \vyd \vye$.
\end{proof}

Thus, Lemma \ref{thm:walkgs} says that up to symmetry or reversal a CPRS walk must pass through $\gs$ as shown in Figure \ref{fig:consgs}.
Loosely, $\gs$ acts like a vertex, in that a CPRS walk passes through it as two walks of the form (entering solo edge)(intermediary edges)(exiting double edge) and (entering double edge)(intermediary edges)(exiting solo edge), but with a restriction: the edge $\vpz \vp$ must be a solo edge.

\begin{figure}[t]
\realline{\hfill%
	\includegraphics[scale=1.27]{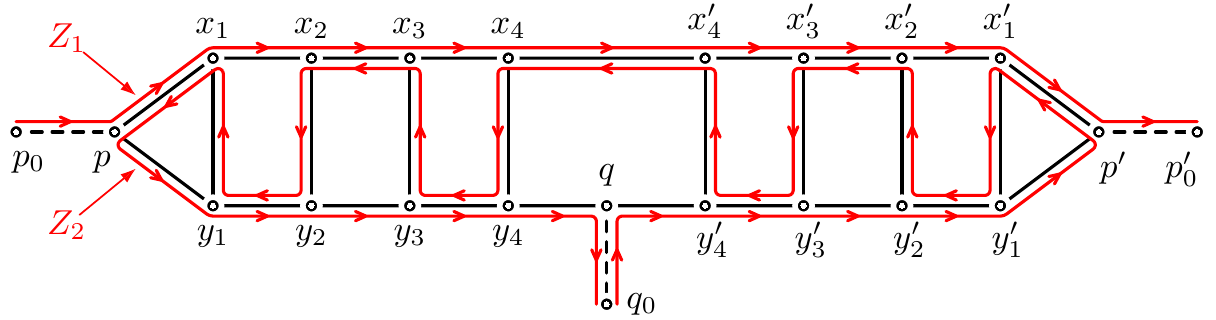}%
	\hfill}
\caption{Vertex gadget $\gt$, and how a CPRS walk passes through it.}
\label{fig:consgt}
\end{figure}

Now we build a larger vertex gadget.  Let $\gs'$ be a copy of $\gs$, with a plane embedding that is the mirror image of the embedding of $\gs$ in Figure \ref{fig:consgs}.  Let $\vp'$ in $\gs'$ correspond to $\vp$ in $\gs$, and so on.
Let $\gt = \gs \cup \gs' \cup \{\vxd \vxd', \vyd \vq, \vyd' \vq\}$ where $\vq$ is a new vertex.  Then $\gt$ is a vertex gadget.  Note that $\gt^+$ can be considered as obtained from $K_4$ by replacing two vertices by copies of $\gs$, so by Lemma \ref{thm:cubicop3} applied twice, $\gt^+$ is $3$-connected, planar, and simple.

\begin{lemma}
\label{thm:walkgt}
Suppose the vertex gadget $\gt$ described above is an induced subgraph of a $2$-connected cubic graph $G$, with incident edges $\vpz \vp$, $\vq \vqz$ and $\vp' \vpz'$.
Let $\wto1 = \ws1 \cc \vxd \vxd' \cc \revb(\ws1')$ and
$\wto2 = \vq \vyd' \cc \revb(\ws2') \cc \vxd' \vxd \cc \ws2 \cc
	\vyd \vq$.
If $W$ is a CPRS walk in $G$ then $W$ passes through $\gt$ and its incident edges as two walks,
either
$\vpz \vp \cc \wto1 \cc \vp' \vpz'$ and
$\vqz \vq \cc \wto2 \cc \vq \vqz$,
or reversing both of these walks.
\end{lemma}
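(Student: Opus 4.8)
The plan is to apply Lemma~\ref{thm:walkgs} separately to the two copies $\gs$ and $\gs'$ of the smaller gadget that make up $\gt$, and then to splice the two resulting local pictures together using the structure of a CPRS walk at the new degree-$3$ vertex $\vq$.

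First I would check that the hypotheses of Lemma~\ref{thm:walkgs} are met for each copy. Since $\gt$ is an induced subgraph of $G$ and none of the three gluing edges $\vxd\vxd'$, $\vyd\vq$, $\vyd'\vq$ has both ends in $V(\gs)$ (respectively $V(\gs')$), each of $\gs$ and $\gs'$ is an induced subgraph of the $2$-connected cubic graph $G$; the three degree-$2$ vertices $\vp,\vxd,\vyd$ of $\gs$ acquire in $G$ the incident edges $\vpz\vp,\vxd\vxd',\vyd\vq$, and symmetrically $\vp',\vxd',\vyd'$ acquire $\vp'\vpz',\vxd'\vxd,\vyd'\vq$. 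Thus Lemma~\ref{thm:walkgs} applies to $\gs$ with its ``$\vxd\vxe$'' and ``$\vyd\vye$'' instantiated as $\vxd\vxd'$ and $\vyd\vq$, and to $\gs'$ with ``$\vxd'\vxe'$'' and ``$\vyd'\vye'$'' instantiated as $\vxd'\vxd$ and $\vyd'\vq$ (that $\gs'$ was given the mirror-image embedding is irrelevant, since Lemma~\ref{thm:walkgs} is purely combinatorial). Each application leaves four possibilities, which split into two groups according to the fate of the common edge $\vxd\vxd'$: under the first alternative of Lemma~\ref{thm:walkgs} (possibly reversed) the edge $\vxd\vxd'$ is a double edge of $W$ and $\vyd\vq$ (respectively $\vyd'\vq$) is solo, while under the second alternative (the one built from $\auts$, possibly reversed) $\vxd\vxd'$ is solo and $\vyd\vq$ (respectively $\vyd'\vq$) is a double edge.

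The key observation is that the two applications are not independent: they must agree on whether $\vxd\vxd'$ is solo or double, so $\gs$ and $\gs'$ lie in the same group. They cannot both lie in the second group, for then $\vyd\vq$ and $\vyd'\vq$ would both be double edges of $W$ incident with $\vq$, contradicting the fact (Corollary~\ref{thm:cubiccprs}) that the double edges of a CPRS walk form a perfect matching. Hence $\gs$ and $\gs'$ both obey the first alternative of Lemma~\ref{thm:walkgs}; in particular $\vyd\vq$ and $\vyd'\vq$ are solo edges of $W$, so the double edge of $W$ at $\vq$ is $\vqz\vq$, and $W$ uses the edge sequences $\ein(W\vq)\,\db(W\vq)$ and $\db(W\vq)\,\eout(W\vq)$ with $\{\ein(W\vq),\eout(W\vq)\}=\{\vyd\vq,\vyd'\vq\}$. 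After reversing $W$ if necessary, assume $\ein(W\vq)=\vyd\vq$, traversed from $\vyd$ to $\vq$, and $\eout(W\vq)=\vyd'\vq$, traversed from $\vq$ to $\vyd'$. Matching these directions against Lemma~\ref{thm:walkgs} then selects the unreversed form of the first alternative for $\gs$, giving $W$-subwalks $\vpz\vp\cc\ws1\cc\vxd\vxd'$ and $\vxd'\vxd\cc\ws2\cc\vyd\vq$, and the reversed form for $\gs'$, giving $W$-subwalks $\vxd\vxd'\cc\revb(\ws1')\cc\vp'\vpz'$ and $\vq\vyd'\cc\revb(\ws2')\cc\vxd'\vxd$.

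Finally I would assemble these four subwalks along their two shared traversals of the edge $\vxd\vxd'$. The traversal from $\vxd$ to $\vxd'$ is at once the last edge of the first of these subwalks and the first edge of the third, so $W$ contains $\vpz\vp\cc\ws1\cc\vxd\vxd'\cc\revb(\ws1')\cc\vp'\vpz'=\vpz\vp\cc\wto1\cc\vp'\vpz'$; the traversal from $\vxd'$ to $\vxd$ is the last edge of the fourth subwalk and the first edge of the second, so $W$ contains $\vq\vyd'\cc\revb(\ws2')\cc\vxd'\vxd\cc\ws2\cc\vyd\vq=\wto2$, a closed subwalk at $\vq$ that $W$ enters via $\vqz\vq$ and leaves via $\vq\vqz$; that is, $W$ contains $\vqz\vq\cc\wto2\cc\vq\vqz$. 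These are exactly the two walks claimed, and the reversed choice for $\ein(W\vq)$ simply reverses both. I expect the main obstacle to be the bookkeeping in this last step: keeping straight the direction of each use of $\vxd\vxd'$ and which reversal of Lemma~\ref{thm:walkgs}'s first alternative is forced at each copy, so that the four pieces glue into precisely $\wto1$ and $\wto2$ rather than into some permuted or reversed variant.
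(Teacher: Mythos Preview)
Your proof is correct and follows the same overall plan as the paper: apply Lemma~\ref{thm:walkgs} to each of $\gs$ and $\gs'$, pin down which of its alternatives holds, and then splice the resulting subwalks across $\vxd\vxd'$ and through $\vq$. The one genuine difference is in how you exclude the $\auts$-alternative. The paper argues by parity: Lemma~\ref{thm:walkgs} already forces $\vpz\vp$ and $\vp'\vpz'$ to be solo, and since $|V(\gt)|$ is odd the perfect matching of double edges must cross the boundary of $V(\gt)$ an odd number of times, so $\vqz\vq$ is double and hence $\vq\vyd$, $\vq\vyd'$ are solo. You instead use the shared edge $\vxd\vxd'$ to synchronise the two copies (they must agree on whether it is double) and then the local matching condition at $\vq$ to rule out the case where it is solo. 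Your route is slightly more elementary, avoiding the odd-cut parity observation; the paper's route is terser and identifies $\vqz\vq$ as the double boundary edge in one stroke. Your final gluing bookkeeping---matching the two directed uses of $\vxd\vxd'$ to the correct (reversed or unreversed) instance of Lemma~\ref{thm:walkgs} on each side---is carried out correctly.
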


\begin{proof}
Applying Lemma \ref{thm:walkgs} to both $\gs$ and $\gs'$, $\vpz \vp$ and $\vp' \vpz'$ are solo edges.
The perfect matching of double edges of $W$ must have an odd number of edges leaving the odd set $V(\gt)$, so $\vqz \vq$ must be a double edge.
Therefore, $\vq \vyd$ and $\vq \vyd'$ are solo edges.
Now Lemma \ref{thm:walkgs}, applied to both $\gs$ and $\gs'$, gives the result.
\end{proof}

Thus, Lemma \ref{thm:walkgt} says that up to reversal (or, equivalently, up to the automorphism of $\gt$ swapping $p$ and $p'$) a CPRS walk must pass through $\gt$ as shown in Figure \ref{fig:consgt}.

\subsection{NP-completeness for $3$-connected cubic planar graphs}

\begin{construction}
\label{thm:consgd}
Suppose we have $\ga$, $\gb$ and $\gc$ as in Constructions \ref{thm:consgb} and \ref{thm:consgc}.
For each vertex $\vb(uv)$ take a copy $\gt_{uv}$ of $\gt$, where $\vp_{uv}, \vq_{uv}, \vp'_{uv}, \wt1uv, \wt2uv$ correspond to $\vp, \vq, \vp', \wto1, \wto2$ in $\gt$, respectively.
Construct $\gd$ by replacing each vertex of the form $\vb(uv)$ in $\gc$ by $\gt_{uv}$, so that if $\vb(uv)$ is adjacent to $\va(uv), \vc(uv), \vc(tu)$ then the edges incident with $\gt_{uv}$ are
$\va(uv) \vp_{uv}$,
$\vq_{uv} \vc(tu)$ and
$\vp'_{uv} \vc(uv)$.

\begin{claim}
The graph $\gd$ is a $3$-connected cubic planar simple graph.
\end{claim}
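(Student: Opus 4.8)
The plan is to realize $\gd$ as the outcome of a finite sequence of vertex-replacement operations applied to $\gc$, and then to invoke Lemma~\ref{thm:cubicop3} once for each operation. By Construction~\ref{thm:consgd}, $\gd$ is obtained from $\gc$ by replacing each vertex of the form $\vb(uv)$ by a copy $\gt_{uv}$ of the vertex gadget $\gt$, attaching the three pendant edges of $\gt_{uv}$ to the former neighbors $\va(uv)$, $\vc(uv)$, $\vc(tu)$ of $\vb(uv)$. Since $\gc$ is cubic, each $\vb(uv)$ has degree exactly $3$ in $\gc$ (its two path-neighbors $\va(uv)$ and $\vc(uv)$, together with the single bracing edge coming in from some $\vc(tu)$), so each of these is a legitimate vertex-replacement in the sense of Lemma~\ref{thm:cubicop3}. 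We already know $\gc$ is $3$-connected, planar and simple (Lemma~\ref{thm:gc3c}), and that $\gt^+$ is $3$-connected, planar and simple (noted just before Lemma~\ref{thm:walkgt}); also $\gt$ is a vertex gadget, as stated there. Hence Lemma~\ref{thm:cubicop3} applies to the first replacement, producing a cubic, $3$-connected, planar, simple graph.

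Next I would iterate. The point that makes the induction clean is that the vertices being replaced form an independent set of $\gc$: the neighbors of $\vb(uv)$ are $\va(uv)$, $\vc(uv)$ and a vertex of the form $\vc(tu)$, none of which is itself of the form $\vb(\cdot\cdot)$. Moreover, replacing one $\vb(uv)$ only rewires the interior of its gadget and leaves $\va(uv)$, $\vc(uv)$, $\vc(tu)$ in place (now joined to $\gt_{uv}$ via $\va(uv)\vp_{uv}$, $\vp'_{uv}\vc(uv)$, $\vq_{uv}\vc(tu)$), so every not-yet-replaced vertex $\vb(st)$ still has degree $3$ and is still eligible for replacement by its gadget $\gt$, whose cubic completion is still $3$-connected, planar and simple. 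Thus by induction on the number of vertices $\vb(uv)$, applying Lemma~\ref{thm:cubicop3} at each step keeps the graph cubic, $3$-connected, planar and simple; after all the replacements the graph is exactly $\gd$, which is therefore cubic, $3$-connected, planar and simple, as claimed. This is entirely parallel to the proof of the claim in Construction~\ref{thm:consgb}, with Lemma~\ref{thm:cubicop3} (vertex gadgets) in place of Lemma~\ref{thm:cubicop2} (edge gadgets).

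The only point needing a little care is planarity, since the $3$-connectedness and simpleness of the result do not depend on which pendant edge of $\gt_{uv}$ is matched to which neighbor of $\vb(uv)$, whereas the specific matching fixed in Construction~\ref{thm:consgd} does matter for planarity. This is subsumed by Lemma~\ref{thm:cubicop3}, but it is worth recording why it is harmless: because $\gt^+$ is planar, deleting its degree-$3$ apex from a plane embedding leaves $\gt$ drawn in a disk with $\vp,\vq,\vp'$ on the boundary, and reflecting that embedding realizes either cyclic order of $\vp,\vq,\vp'$; hence, whatever the rotation of $\va(uv),\vc(uv),\vc(tu)$ around $\vb(uv)$ in the chosen plane embedding of $\gc$, the replacement by $\gt_{uv}$ can be performed without introducing crossings. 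I expect this bit of bookkeeping — rather than any genuine difficulty — to be the main obstacle; the rest is a direct appeal to Lemma~\ref{thm:cubicop3}.
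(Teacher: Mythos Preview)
Your proof is correct and follows the same approach as the paper: note that $\gc$ is $3$-connected, planar, and simple (Lemma~\ref{thm:gc3c}) and that $\gt^+$ is $3$-connected, planar, and simple, then obtain $\gd$ from $\gc$ by repeated application of Lemma~\ref{thm:cubicop3}. The paper's proof is just the two-line version of this; your additional remarks about independence of the $\vb(uv)$ vertices and about matching the cyclic order for planarity are correct elaborations but not strictly needed beyond what Lemma~\ref{thm:cubicop3} already guarantees.
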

\end{construction}

\begin{proof}[Proof of claim]
As noted above, $\gt^+$ is a $3$-connected, planar and simple, and so is $\gc$ by Lemma \ref{thm:gc3c}.  The claim follows by repeated application of Lemma \ref{thm:cubicop3}.
\end{proof}

\begin{proposition}
\label{thm:thm3c}
For $\ga$, $\gb$, $\gc$ and $\gd$ as in Constructions \ref{thm:consgb}, \ref{thm:consgc} and \ref{thm:consgd}, the following are equivalent.

{\leftskip=\parindent

 \hangindent\parindent
 \noindent (a) $\ga$ has a hamilton cycle.

 \hangindent\parindent
 \noindent (b) $\gb$ has a Chinese postman reporter strand walk.

 \hangindent\parindent
 \noindent (c) $\gb$ has a Chinese postman reporter strand walk using every edge of the form $\va(uv) \vd(uv)$ as a solo edge.

 \hangindent\parindent
 \noindent (d) $\gd$ has a Chinese postman reporter strand walk.

}
\end{proposition}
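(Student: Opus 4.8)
The plan is to prove the four statements equivalent via the cycle (a)$\implies$(c)$\implies$(b)$\implies$(a) together with (c)$\iff$(d). The implication (b)$\implies$(a) is Proposition~\ref{thm:thm2c}, and (c)$\implies$(b) is immediate, since a CPRS walk meeting the extra requirement in~(c) is a fortiori a CPRS walk. For (a)$\implies$(c) I would revisit the proof of Proposition~\ref{thm:thm2c}, which builds a CPRS walk of $\gb$ from a hamilton cycle $C$ of $\ga$ by replacing each directed edge $uv$ of $C$ by $\wb1uv$ and then splicing in $\wb2uw$ and $\wb2wu$ for each $uw \in E(\ga)-E(C)$. A direct trace of edges shows that $\wb1uv$ uses $\va(uv)\vd(uv)$ and $\va(vu)\vd(vu)$ as double edges, that $\autb(uv)(\wb1uv)$ uses both of them as solo edges, and that $\wb2uw$ already uses $\va(uw)\vd(uw)$ as a solo edge. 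Since $\autb(uv)$ is an automorphism of $\gb_{uv}$ fixing $u$ and $v$, the walk $\autb(uv)(\wb1uv)$ still runs from $u$ to $v$ and still uses the edges $u\va(uv)$ and $v\va(vu)$; so running the same construction with $\autb(uv)(\wb1uv)$ in place of $\wb1uv$ produces another CPRS walk of $\gb$ (the verification via Corollary~\ref{thm:cubiccprs} is unaffected) in which every edge $\va(uv)\vd(uv)$ is solo. This gives~(c).

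For (c)$\iff$(d) the key structural fact comes from Lemma~\ref{thm:walkgt}: any CPRS walk $W$ of $\gd$ passes through each $\gt_{uv}$ and its incident edges $\va(uv)\vp_{uv}$, $\vq_{uv}\vc(tu)$, $\vp'_{uv}\vc(uv)$ as two subwalks, namely (up to reversal) a subwalk $\va(uv)\vp_{uv} \cc \wt1uv \cc \vp'_{uv}\vc(uv)$ running from $\va(uv)$ through $\gt_{uv}$ to $\vc(uv)$ and using $\va(uv)\vp_{uv}$, $\vp'_{uv}\vc(uv)$ as solo edges, together with a subwalk $\vc(tu)\vq_{uv} \cc \wt2uv \cc \vq_{uv}\vc(tu)$, which is a closed excursion beginning and ending at $\vc(tu)$ that uses the bracing edge $\vq_{uv}\vc(tu)$ twice, consecutively. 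By Construction~\ref{thm:consgd}, the bracing edges of $\gc$ pair the gadgets with the $\vc$-vertices bijectively: the bracing edge incident with $\vc(tu)$ is precisely the one that becomes $\vq_{uv}\vc(tu)$ in $\gd$, so the closed excursion $\wt2uv$ is attached at the single vertex $\vc(tu)$, and distinct gadgets are attached at distinct $\vc$-vertices. Consequently, in any CPRS walk of $\gd$ the walk enters each $\vc(xy)$ along one of its two non-bracing (solo) edges, makes its single contiguous excursion along the bracing (double) edge, and departs along the other solo edge; in particular every edge of each subdivided path $\va(uv)\vb(uv)\vc(uv)\vd(uv)$ is used exactly once.

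Granting this, (d)$\implies$(c) goes as follows: from a CPRS walk $W$ of $\gd$, delete every closed excursion of type $\wt2uv$ (each is a contiguous closed subwalk, so deleting it leaves a single closed walk), collapse each $\wt1uv$-subwalk to a single pass through a new degree-$2$ vertex $\vb(uv)$, and then suppress the resulting degree-$2$ vertices $\vb(uv)$, $\vc(uv)$ to recover the edge $\va(uv)\vd(uv)$; the result is a closed walk of $\gb$ that is edge-spanning, orientable, retraction-free, and still uses two solo edges and one double edge at every vertex, hence is a CPRS walk by Corollary~\ref{thm:cubiccprs}, and every $\va(uv)\vd(uv)$ is solo because its subdivided path was traversed once. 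Conversely, for (c)$\implies$(d), start from a CPRS walk of $\gb$ with every $\va(uv)\vd(uv)$ solo, replace the single traversal of each $\va(uv)\vd(uv)$ by a $\wt1uv$-route through the subdivided, gadgeted path of $\gd$, and at each vertex $\vc(tu)$ splice in the closed excursion $\wt2uv$ along the corresponding bracing edge; the bijection above ensures each gadget's excursion is inserted exactly once and every bracing edge is covered as a double edge, and Corollary~\ref{thm:cubiccprs} again confirms that the result is a CPRS walk of $\gd$.

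I expect the main obstacle to be the bookkeeping in (c)$\iff$(d): keeping track of which vertex $\vc(tu)$ hosts which gadget's excursion (as dictated by the clockwise-neighbor function $\cwn_\ga$), and verifying carefully that deleting or splicing the closed excursions of type $\wt2uv$ preserves all four defining properties of a CPRS walk --- in particular that the walk stays a single closed walk and that its double edges continue to form a perfect matching. The point that makes this work is exactly that Lemma~\ref{thm:walkgt} forces each $\wt2uv$-piece to be a \emph{single contiguous} closed excursion hanging off one vertex $\vc(tu)$, rather than two separate passages through that vertex, so that its removal cannot disconnect the walk and its insertion cannot create a retraction.
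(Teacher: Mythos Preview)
Your plan is essentially correct and follows the paper's own route: the paper also reduces to (c)$\iff$(d) after noting (a)$\iff$(b) via Proposition~\ref{thm:thm2c} and observing (as you do) that the CPRS walk built there can be adjusted by applying $\autb(uv)$ so that every $\va(uv)\vd(uv)$ becomes solo.

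One genuine subtlety you skate over in (c)$\Rightarrow$(d): when you splice the closed excursion $\wt2uv$ at $\vc(tu)$, its \emph{direction} is not free. Inside $\gt_{uv}$ the edge $\vxd\vxd'$ is traversed once by $\wto1$ and once by $\wto2$, in opposite directions; so if your $\wt1uv$-route through the gadget is used forward (because $W$ traverses $\va(uv)\vd(uv)$ in the direction $\va(uv)\to\vd(uv)$), then the spliced $\wt2uv$ must also be forward, and if $\wt1uv$ is reversed then $\wt2uv$ must be reversed as well---otherwise orientability fails and Corollary~\ref{thm:cubiccprs} does not apply. The paper handles this by packaging each replacement of a solo edge $\va(uv)\vd(uv)$ as a single walk $T^{ij}_{uv}$ that already contains both $\wt1uv$ and the correctly oriented $\wt2vw$ (where $w=\cwn_\ga(v,u)$), with the index $ij\in\{00,01,10,11\}$ recording the directions of the two relevant solo edges $\va(uv)\vd(uv)$ and $\va(vw)\vd(vw)$. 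Your two-step ``route then splice'' organization is equivalent, but you should state explicitly that the direction of the $\wt2uv$ excursion is determined by the direction in which $W$ traversed $\va(uv)\vd(uv)$, not by anything local to $\vc(tu)$. With that correction the argument goes through exactly as in the paper.
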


\begin{proof}
By Proposition \ref{thm:thm2c}, (a) $\siff$ (b).  Clearly (c) $\simplies$ (b).  Suppose (b) holds and we have a CPRS walk $W$ in $\gb$.
Suppose some $\va(uv) \vd(uv)$ is not a solo edge of $W$.
By Lemma \ref{thm:walkgb}, $W$ must use $\wb1uv$ or its reverse; replacing this by $\autb(uv)(\wb1uv)$ or its reverse we still have a CPRS walk, and now $\va(uv) \vd(uv)$ (and also $\va(vu) \vd(vu)$) is a solo edge.  Applying this to all $\va(uv) \vd(uv)$ that are not solo edges, we obtain a CPRS walk $W'$ satisfying (c).  Thus, (b) $\simplies$ (c).

So now we show that (c) $\siff$ (d).
Suppose that (c) holds, with a walk $W$ using each $\va(uv) \vd(uv) \in E(\gb)$ as a solo edge.
The bracing edge of $\gc$ incident with $\vc(uv)$ has the form $\vc(uv) \vb(vw)$, where $w$ follows $u$ in clockwise order around $v$ in $\ga$.
Replace each directed edge $\va(uv) \vd(uv)$, or its reverse, in $W$ by a walk in $\gd$ according to the following rules:

\smallskip
$W$ uses $\va(uv) \vd(uv)$, $\va(vw) \vd(vw)$:
$\va(uv) \vd(uv)$ $\to$
$T^{00}_{uv} = \va(uv) \vp_{uv} \cc \wt1uv \cc \vp'_{uv} \vc(uv)
	\vq_{vw} \cc
	\wt2vw \cc \vq_{vw} \vc(uv) \vd(uv)$.

\smallskip
$W$ uses $\va(uv) \vd(uv)$, $\vd(vw) \va(vw)$:
$\va(uv) \vd(uv)$ $\to$
$T^{01}_{uv} = \va(uv) \vp_{uv} \cc \wt1uv \cc \vp'_{uv} \vc(uv)
	\vq_{vw} \cc
	\revb(\wt2vw) \cc \vq_{vw} \vc(uv) \vd(uv)$.

\smallskip
$W$ uses $\vd(uv) \va(uv)$, $\va(vw) \vd(vw)$:
$\vd(uv) \va(uv)$ $\to$
$T^{10}_{uv} = \vd(uv) \vc(uv) \vq_{vw} \cc \wt2vw \cc \vq_{vw}
	\vc(uv)
	\vp'_{uv} \cc \revb(\wt1uv) \cc \vp_{uv} \va(uv)$.

\smallskip
$W$ uses $\vd(uv) \va(uv)$, $\vd(vw) \va(vw)$:
$\vd(uv) \va(uv)$ $\to$
$T^{11}_{uv} = \vd(uv) \vc(uv) \vq_{vw} \cc \revb(\wt2vw) \cc
	\vq_{vw} \vc(uv)
	\vp'_{uv} \cc \revb(\wt1uv) \cc \vp_{uv} \va(uv)$.

\smallskip
\noindent
The rules guarantee that in each $\gt_{uv}$ we use both $\wt1uv$ and $\wt2uv$, or both $\revb(\wt1uv)$ and $\revb(\wt2uv)$.
Therefore, the result is a CPRS walk $W'$ in $\gd$.  Thus, (d) holds.

Conversely, suppose (d) holds, so $\gd$ has a CPRS walk $W$. Consider each $\va(uv) \vd(uv) \in E(\gb)$ and the corresponding bracing edge $\vc(uv) \vb(vw) \in E(\gc)$.
 Applying Lemma \ref{thm:walkgt} to $\gt_{uv}$ and $\gt_{vw}$, we see that $W$ must either travel from $\va(uv)$ to $\vd(uv)$ along  $T^{00}_{uv}$ or $T^{01}_{uv}$ from above, or travel from $\vd(uv)$ to $\va(uv)$ along $T^{10}_{uv}$ or $T^{11}_{uv}$.
In the former case, replace this subwalk of $W$ by the edge $\va(uv) \vd(uv)$ of $\gb$; in the latter case replace it by $\vd(uv) \va(uv)$.  Making all such replacements gives a CPRS walk $W'$ in $\gb$ in which each $\va(uv) \vd(uv)$ is a solo edge.  Thus, (c) holds.
\end{proof}

Constructions \ref{thm:consgb}, \ref{thm:consgc} and \ref{thm:consgd} therefore give a polynomial time transformation from the hamilton cycle problem for $3$-connected cubic planar simple graphs to the \textsc{CPRS Walk} problem for the same family of graphs.
Applying these constructions to nonhamiltonian $3$-connected cubic planar graphs $\ga$ proves the existence of arbitrarily large $3$-connected cubic planar simple graphs $\gd$ with no CPRS walk (or we can construct small examples of such graphs easily using vertex gadgets $\gs$ and $\gt$).
Our final theorem also follows immediately.

\begin{theorem}
\label{thm:npc3c}
The problems \textsc{Shortest Reporter Strand Walk} and \textsc{Chinese Postman Reporter Strand Walk} are NP-complete for $3$-connected cubic planar simple graphs.
\end{theorem}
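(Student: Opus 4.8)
The plan is to combine the machinery already in place, so that the theorem falls out immediately. First, both decision problems lie in NP: as observed earlier, a `yes' instance of \textsc{SRS Walk} is certified by an orientable embedding of $G$ whose outer facial walk has length at most $k$, and a `yes' instance of \textsc{CPRS Walk} is certified by a reporter strand walk of length $\cp(G)$, where $\cp(G)$ is computable in polynomial time by the algorithm of Edmonds and Johnson \cite{EJ73}. Moreover, any instance $G$ of \textsc{CPRS Walk} reduces in polynomial time to the instance $(G,\cp(G))$ of \textsc{SRS Walk} on the same graph, and $G$ is a `yes' instance of the former exactly when $(G,\cp(G))$ is a `yes' instance of the latter. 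Hence it suffices to show that \textsc{CPRS Walk} is NP-hard for $3$-connected cubic planar simple graphs; NP-hardness of \textsc{SRS Walk} for the same class then follows, and combined with membership in NP both problems are NP-complete on that class.

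For the hardness reduction I would start from the Hamilton cycle problem for $3$-connected cubic planar simple graphs, which is NP-complete by \cite{GJT76}. Given such a graph $\ga$, apply Constructions \ref{thm:consgb}, \ref{thm:consgc} and \ref{thm:consgd} in turn to produce the graph $\gd$. The claim proved in Construction \ref{thm:consgd} (via Lemma \ref{thm:cubicop3}, with Lemma \ref{thm:gc3c} and Lemma \ref{thm:cubicop2} used along the way) guarantees that $\gd$ is again $3$-connected, cubic, planar and simple. The construction runs in polynomial time: each edge of $\ga$ is replaced by a bounded-size subgraph and each vertex $\vb(uv)$ by a bounded-size copy of $\gt$, so $|V(\gd)|$ is linear in $|V(\ga)|$. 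By Proposition \ref{thm:thm3c}, $\ga$ has a Hamilton cycle if and only if $\gd$ has a CPRS walk. This exhibits the required polynomial-time many-one reduction, completing the NP-hardness argument.

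Since every essential ingredient has already been established, there is no genuine obstacle left; the only things to check carefully are that the chain of three constructions preserves the graph class — handled by the claims in the Constructions together with Lemmas \ref{thm:cubicop2} and \ref{thm:cubicop3} — and that the total size blow-up is polynomial, which is immediate because only finitely many fixed gadgets are used and each original vertex or edge of $\ga$ spawns a bounded number of them. As a byproduct, applying the same pipeline to any nonhamiltonian $3$-connected cubic planar graph $\ga$ yields arbitrarily large $3$-connected cubic planar simple graphs $\gd$ with no CPRS walk, answering the possibility raised at the start of Section \ref{sec:3c}.
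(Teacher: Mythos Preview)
Your argument is correct and mirrors the paper's own reasoning: you invoke the NP membership observations from Section~\ref{sec:2c}, reduce \textsc{SRS Walk} to \textsc{CPRS Walk} on the same graph via $(G,\cp(G))$, and obtain NP-hardness from the polynomial-time chain of Constructions \ref{thm:consgb}, \ref{thm:consgc}, \ref{thm:consgd} together with Proposition~\ref{thm:thm3c}, exactly as the paper does. The remark about nonhamiltonian $\ga$ producing $\gd$ with no CPRS walk is also present in the paper.
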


\section{Conclusion}
\label{sec:conclusion}

This application brings to light a new, natural area of investigation in topological graph theory,  edge-outer embeddability, which seems quite rich in attractive questions and new directions:

\begin{enumerate}

\item
The algorithm in Section \ref{sec:shortproof}  provides a fast routing solution that is within 100\% of optimal (at most twice the length).  Is there a polynomial-time algorithm that will return a reporter strand walk that is within a smaller percentage of minimum length?  A related result appears in \cite{MH17}, where they give a cubic-time $\frac {5}{3}$-approximation algorithm in the special case that the graph is a triangulation of an orientable surface.

\item Can we extend Corollary \ref{cor:rsmaxgenus} to say more about the genus range of embeddings that yield reporter strand walks, or reporter strand walks of minimum length?  Are these ranges intervals?

\item Are there classes of graphs where it is polynomial-time to find a minimum length reporter strand walk?  Eulerian graphs are one such class. We have shown that the problem is NP-hard for 3-connected graphs, but can it be solved in polynomial time for graphs with higher connectivity?  

\item What pragmatic approaches might there be to finding suitable scaffolding strand routes, albeit possibly with restrictions or other design costs?  One such approach is provided by \cite{B+15}, which describes a strand routing design algorithm using an A-trail heuristic that performs well on reasonably sized triangulations of the sphere, provided that some `double-width' edges (using two double helices) are acceptable in the final product.   Another approach may be found in \cite{V+17}, which gives a fast algorithm, but essentially makes all of the edges `double-width'. Other methods of efficiently determining suitable routes with reasonable design trade-offs would help advance the field of DNA origami. 

\end{enumerate}

\end{document}